\begin{document}

\newcommand{\cyrrm}{\fontencoding{OT2}\selectfont\textcyrup}
\newcommand{\cyrit}{\fontencoding{OT2}\selectfont\textcyrit}
\newcommand{\om}{\omega}

\newcommand{\beqn}{\begin{eqnarray}}
\newcommand{\eeqn}{\end{eqnarray}}
\newcommand{\fr}{\frac}
\newcommand{\nin}{\noindent}
\newcommand{\beq}{\begin{equation}}
\newcommand{\eeq}{\end{equation}}
\newtheorem{lemma}{lemma}[section]
\newtheorem{defn}[lemma]{Definition}
\newtheorem{thm}[lemma]{Theorem}
\newtheorem{prop}[lemma]{Proposition}
\newtheorem{cor}[lemma]{Corollary}
\newtheorem{remark}[lemma]{Remark}
\newtheorem{conjecture}[lemma]{Conjecture}
\newtheorem{example}[lemma]{Example}
\newtheorem{problem}[lemma]{Problem}
\newtheorem{defn-prop}[lemma]{Definition--Proposition}
\newtheorem{question}[lemma]{Question}

\newenvironment{rem}{\smallskip\noindent%
\refstepcounter{subsection}%
{\bf \thesubsection}~~{\sc Remark.}\hspace{-1mm}}
{\smallskip}

\newenvironment{ex}{\smallskip\noindent%
\refstepcounter{subsection}%
{\bf \thesubsection}~~{\sc Example.}}
{\smallskip}

\newenvironment{que}{\smallskip\noindent%
\refstepcounter{subsection}%
{\bf \thesubsection}~~{\sc Question.}\hspace{-1mm}}
{\smallskip}

\renewcommand{\theequation}{\arabic{equation}}

\newcommand{\Vir}{\text{\upshape Vir}}
\newcommand{\Rot}{\text{\upshape Rot}}
\newcommand{\Diff}{\text{\upshape Diff}}
\newcommand{\Vect}{\text{\upshape Vect}}
\newcommand{\Met}{\text{\upshape Met}}
\newcommand{\Metmu}{\Met_{\mu}}
\newcommand{\Vol}{\text{\upshape Dens}}
\newcommand{\SDiff}{Diff_\mu}
\newcommand{\SVect}{\Vect_\mu}
\newcommand{\Ham}{\text{\upshape Ham}}
\newcommand{\HamM}{\Ham_\omega(M)}
\newcommand{\ham}{\text{\upshape ham}}
\newcommand{\hamM}{\ham_\omega(M)}
\newcommand{\Symp}{\text{\upshape Symp}}
\newcommand{\symp}{\text{\upshape symp}}
\newcommand{\SDiffM}{\Diff_\mu(M)}
\newcommand{\SympM}{\Symp_\omega(M)}
\newcommand{\sympM}{\symp_\omega(M)}
\newcommand{\Diffmu}{\Diff_{\mu}}
\newcommand{\Isog}{\text{Iso}_g}
\newcommand{\DiffM}{\Diff(M)}
\newcommand{\MetM}{\Met(M)}
\newcommand{\MetmuM}{\Metmu(M)}
\newcommand{\VolM}{\Vol(M)}
\newcommand{\DiffmuM}{\Diffmu(M)}
\newcommand{\R}{\mathbb R}
\newcommand{\C}{\mathbb C}

\makeatletter
\newcommand\ackname{Acknowledgements}
\if@titlepage
  \newenvironment{acknowledgements}{%
      \titlepage
      \null\vfil
      \@beginparpenalty\@lowpenalty
      \begin{center}%
        \bfseries \ackname
        \@endparpenalty\@M
      \end{center}}%
     {\par\vfil\null\endtitlepage}
\else
  \newenvironment{acknowledgements}{%
      \if@twocolumn
        \section*{\abstractname}%
      \else
        \small
        \begin{center}%
          {\bfseries \ackname\vspace{-.5em}\vspace{\z@}}%
        \end{center}%
        \quotation
      \fi}
      {\if@twocolumn\else\endquotation\fi}
\fi
\makeatother


\newcounter{bk}
\newcommand{\bk}[1]
{\stepcounter{bk}$^{\bf BK\thebk}$%
\footnotetext{\hspace{-3.7mm}$^{\blacksquare\!\blacksquare}$
{\bf BK\thebk:~}#1}}

\newcounter{gm}
\newcommand{\gm}[1]
{\stepcounter{gm}$^{\bf GM\thest}$%
\footnotetext{\hspace{-3.7mm}$^{\blacksquare\!\blacksquare}$
{\bf GM\thest:~}#1}}


\title {Symplectic structures and dynamics\\on  vortex membranes}

\author{Boris Khesin\thanks{
Department of Mathematics,
University of Toronto, Toronto, ON M5S 2E4, Canada;
e-mail: \tt{khesin@math.toronto.edu}
}
}
\date{January 27, 2012}
\maketitle
\rightline{\it To the memory of Vladimir Igorevich Arnold}
\bigskip

\begin{abstract}
We present a Hamiltonian framework for higher-dimensional vortex filaments (or membranes) and vortex sheets as singular 2-forms with support of codimensions 2 and 1, respectively, i.e. singular elements of the dual to the Lie algebra of divergence-free vector fields. It turns out that the localized induction approximation (LIA) of the hydrodynamical Euler equation describes the skew-mean-curvature flow on vortex membranes of codimension 2 in any $\R^n$, which generalizes to any dimension the classical binormal, or vortex filament, equation in $\R^3$.

This framework also allows one to define the symplectic structures on the spaces of vortex sheets, which interpolate between the corresponding structures on vortex filaments and smooth vorticities.
\end{abstract}

\tableofcontents

\bigskip

\section*{Preface}
Vladimir Arnold's 1966 seminal paper \cite{Arn66} in which he introduced 
numerous geometric ideas into hydrodynamics  influenced the field far beyond its original scope. One of Arnold's  remarkable and, in my opinion, very unexpected insights was to regard the fluid vorticity field (or the vorticity 2-form) as an element of the dual to
the Lie algebra of the fluid velocities, i.e., the algebra of divergence-free vector fields on the flow domain. 

In this paper, after a review of the concept of isovorticed fields, which was crucial, e.g., in Arnold's stability criterion in fluid dynamics, we present an ``avatar" of this concept,  providing a natural framework for the formalism of vortex membranes and vortex sheets. In particular, we present the equation of localized induction approximation, which turns out to be the skew-mean-curvature flow in any dimension. We also show that the space of vortex sheets has a natural symplectic structure and  occupies an intermediate position between vortex filaments in 3D (or point vortices in 2D) equipped with the Marsden-Weinstein symplectic structure on the one hand and smooth vorticity fields with the Lie-Poisson structure on them on the other hand.

\medskip

Before launching into hydrodynamical formalism in this memorial paper, I  would like to recall an episode with Vladimir Igorevich, related not to fluid dynamics, but rather to his equally surprising insights in real life: his remarks were always witty, to the point, and often mischievous.\footnote{At times it was hard to tell whether he was being serious or joking. For instance, when inviting the seminar participants for an annual ski trip outside of Moscow, Arnold would say:
``This time we are not planning too much, only about 60km. Those who doubt they could make it --- need not worry: the trail is so conveniently designed that one can return from interim bus stops on the way, which we'll be passing by every 20km."}

Back in 1986 Arnold became a corresponding member of the Soviet Academy of Sciences. This was the time of 
``glasnost" and ``acceleration": novels of many formerly forbidden authors appeared in print for the first time.
Jacques Chirac, Prime Minister of France at the time, visited Moscow and gave a speech in front of the Soviet Academy in the Spring of 1986. The speech was typeset beforehand and distributed to the Academy members.
Arnold was meeting us, a group of his students, right after Chirac's speech and brought us that printout.
Chirac, who knows Russian, mentioned almost every disgraced poet or writer of the Soviet Russia in his speech: it contained citations from Gumilev, Akhmatova, Mandelshtam, Pasternak... And on the top of this printout, above the speech, was the following epigraph in Arnold's unmistakable handwriting: 

\medskip

	              \leftline{\cyritshape  ``... Ya dopuwu: uspehi nashi bystry } 
                   \leftline{\cyritshape  No gde zh u nas ministr-demagog?}
		      \leftline{\cyritshape  Pustp1 proberut vse spiski i registry,}
		      \leftline{\cyritshape Ya pyatp1 ruble\u{i} bumazhnyh dam v zalog;}
                     \leftline{\cyritshape Bytp1 mozhet, ih vo Francii nemalo,}
                     \leftline{\cyritshape  No na Rusi ih net -- i ne byvalo!''}
\medskip

~\qquad {\cyritshape   A.K.~Tolsto\u{i} ``Son Popova" (1873)}\footnote{English translation by A.B.~Givental:\\
~\\   
``... Our nation's rise is, I concur, gigantic,\\
But demagogs among our statesmen?! Let\\
all rosters, in the fashion most pedantic,\\
be searched, I'd put five rubles for a bet:\\
There could be more than few in France or Prussia,\\
but are - and have been - none in mother Russia!"

{\it ~\quad from ``Popov's Dream" by A.K.~Tolstoy (1873)}}

\smallskip

Who could take this speech seriously after such a tongue-in-cheek epigraph?
As a curious aftermath, Arnold and Chirac shared the Russia State Prize in 2007. 
 
\bigskip

Returning to mathematics, I think that 
the ideas introduced by Arnold in \cite{Arn66}, so natural in retrospect, 
are in fact most surprising given the state of the art in hydrodynamics of the mid-60s 
both for their deep insight into the nature of fluids 
and their geometric elegance and simplicity. 
In the next section we begin with a brief survey of the use of vorticity in a few 
hydrodynamical applications and discuss how it helps in understanding 
the properties of vortex filaments and vortex sheets. 

\bigskip


\section{The vorticity form of the Euler equation}

Consider the Euler equation for an inviscid incompressible fluid 
filling a Riemannian manifold $M$ (possibly with boundary).  The fluid motion is described 
as an evolution of the fluid velocity field $v$  in $M$ which is governed by  the 
classical Euler equation:
\begin{equation}\label{ideal}
\partial_t v+(v, \nabla) v=-\nabla p\,.
\end{equation}
Here the field $v$ is assumed to be divergence-free (${\rm div}\, v=0$)  
with respect to the Riemannian volume form $\mu$ and tangent to the boundary of $M$. 
The pressure function $p$ is defined uniquely modulo an additive constant 
by these restrictions on the velocity $v$. The term
$(v, \nabla) v$ stands for the Riemannian covariant derivative 
$\nabla_v v$ of the field $v$ in the direction of itself.

\bigskip

\subsection{The Euler equation on vorticity}

The vorticity (or Helmholtz) form of the Euler equation is
\begin{equation}\label{idealvorticity}
\partial_t \xi+L_v \xi=0\,,
\end{equation}
where $L_v$ is the Lie derivative along the field $v$ and
which means that the vorticity field $\xi:={\rm curl}~v$ is transported by (or ``frozen into") the fluid flow.
In 3D the vorticity field $\xi$ can be thought of as a vector field, while in 2D it is a scalar vorticity function.
In the standard 2D-space with coordinates $(x_1,x_2)$ the vorticity function is curl~$v:={\partial v_2}/{\partial x_1}-{\partial v_1}/{\partial x_2}$, 
which can be viewed as the vertical coordinate of the vorticity vector field for the 2D plane-parallel flow in 3D.
The fact that the vorticity is ``frozen into" the flow allows one to define 
various invariants of the  hydrodynamical Euler equation,  e.g., the conservation of 
helicity in 3D and  enstrophies in 2D. 

The Euler equation has the following Hamiltonian formulation.  For an $n$-dimensional Riemannian manifold $M$ with a volume form $\mu$ consider the  Lie group $G=\SDiffM$ of volume-preserving 
diffeomorphisms of $M$. 
The corresponding Lie algebra $\mathfrak g={\SVect}(M)$ 
consists of smooth divergence-free vector fields in $M$ tangent to the boundary $\partial M$:  
$$
{\SVect}(M)=\{V\in \Vect(M)~|~L_V\mu=0\,{\rm and }\, V||\partial M\}\,.\footnote{We usually denote generic elements of ${\SVect}(M)$, as well as variation fields, by capital letters, while keeping the small $v$ notation for  velocity fields related to the dynamics.}
$$  
The natural ``regular dual" space for this Lie algebra is the space of cosets of smooth 1-forms on $M$ modulo exact 1-forms, $\mathfrak g^*=\Omega^1(M)/d\Omega^0(M)$, see e.g. \cite{AK, MW}. 
The pairing between cosets $[\eta]$ of 1-forms $\eta$ and vector fields $W\in {\SVect}(M)$ is given by 
\begin{equation}\label{pairing}
\langle [\eta],W\rangle:=\int_M i_W \eta\cdot \mu\,,
\end{equation}
where $i_W$ is the contraction of a differential form with a vector field $W$.
The Euler equation (\ref{ideal}) on the dual space assumes the form
$$
\partial_t[\eta]+L_v[\eta]=0\,,
$$
where $[\eta]\in \Omega^1(M)/d\Omega^0(M)$ stands for the coset of the 1-form $\eta=v^\flat$ related to the velocity vector field $v$ by means of the Riemannian metric on $M$. (For a manifold $M$ equipped with a Riemannian metric $(.,.)$ one defines the 1-form $v^\flat$ as the pointwise inner product with 
vectors of the velocity field $v$:
$v^\flat(W): = (v,W)$ for all $W\in T_xM$, see details in \cite{Arn66, AK}.) 

Instead of dealing with cosets of 1-forms, it is often more convenient to pass to their differentials.
The {\it vorticity 2-form} $\xi:=dv^\flat$ is the differential of the 1-form $\eta=v^\flat$.
Note that  in 3D the vorticity vector field $\mathrm{curl}~v$ is defined by the 2-form $\xi$ via $i_{\mathrm{curl}\,v} \mu=\xi$ for the volume form $\mu$. 
In 2D $\mathrm{curl}~v$ is the function $\mathrm{curl}~v:=\xi/\mu$.
The definition of vorticity $\xi$ as an exact 2-form in $M$ makes sense for any dimension of the manifold $M$. This point of view can be traced back to the original papers 
by Arnold, see e.g. \cite{ArnHamiltonian, arn}

Such a definition immediately implies that: 

$i)$ the vorticity 2-form $\xi:=d\eta$ is well-defined for cosets $[\eta]$: 1-forms $\eta$ in the same coset have equal vorticities, and 

$ii)$ the Euler equation in the form (\ref{idealvorticity})
or $\partial_t (d\eta)+L_v (d\eta)=0$ means that the vorticity 2-form $\xi=d\eta$ is transported by (or frozen into) the fluid  flow in {\it any} dimension. The latter allows one to define  generalized enstrophies 
for all even-dimensional flows  and  helicity-type integrals for all
odd-dimensional ideal fluid flows, which turn out to be first integrals of the corresponding 
higher-dimensional Euler equation, see e.g. \cite{AK}.
This geometric setting can be rigorously developed within the Sobolev framework for $H^s$ diffeomorphisms and vector fields on $M$ for sufficiently large $s$, see \cite{EM}. To present the geometric ideas  and  include singular vorticities  we  keep things formal in what follows.
\bigskip

\begin{remark} 
{\rm
This point of view on vorticity was the basis for Arnold's stability criterion. Namely, steady fluid flows are critical points of the
  restriction of the Hamiltonian (which is the kinetic energy function defined on the dual space) to the spaces of isovorticed fields, i.e. sets of fields with  diffeomorphic vorticities. If the restriction of the Hamiltonian functional has a sign-definite (positive or negative) second variation at the critical point, the corresponding  steady flow is Lyapunov stable.
This is  famous Arnold's stability test. In particular, he proved (see e.g. \cite{arn, AK}) that shear flows in an annulus with no inflection points in the velocity profile are Lyapunov stable, thus generalizing the Rayleigh stability condition.
}
\end{remark}

\bigskip

\subsection{Smooth and singular vorticities}

Another consequence of such a point of view on vorticity, which is of the main interest to us, is the existence of the Poisson structure. 

Let $M$ be an $n$-dimensional Riemannian manifold with a volume form $\mu$
and filled with an incompressible fluid.  As we discussed above,
the vorticity of a fluid motion geometrically is  the 2-form
defined by $\xi:= dv^\flat $, where $v^\flat$ is the 1-form obtained from the vector field $v$ by the metric lifting of indices.
Assume that $H^1(M)=0$ to simplify the reasoning below. 
Then the space of vorticities $\{\xi\}$, i.e. the space of exact 2-forms $d\Omega^1(M)$, 
coincides with the
dual space to the Lie algebra $\Vect_\mu(M)$ of divergence-free vector
fields. Indeed, $\Vect_\mu(M)^*\simeq \Omega^1/d\Omega^0\simeq d\Omega^1$ where the latter identification holds since $H^1(M)=0$.

\smallskip

\begin{remark}
{\rm
As the dual space to a Lie algebra,  the  space of vorticities $\Vect_\mu(M)^*=\{\xi\}$ has the natural Lie-Poisson structure.
Its symplectic leaves are coadjoint orbits of the corresponding group $\SDiffM$.
Here such orbits are sets of fields with  diffeomorphic vorticities on $M$,
with the group action being the action of volume-preserving diffeomorphisms on  vorticity 2-forms. The Euler equation defines a Hamiltonian evolution on these orbits.

 The corresponding (Kirillov-Kostant) symplectic structure on orbits in $\SVect(M)^*$  
 is given by the following formula. 
 Let $V$ and $W$ be two divergence-free vector fields in $M$, which we regard as a pair 
of variations of the point $\xi$ in $\SVect(M)^*$. The {\it Kirillov-Kostant symplectic structure} on  coadjoint orbits  associates to a pair of such variations
tangent to the coadjoint orbit of the vorticity $\xi$ the following quantity:
\begin{equation}\label{KK}
\omega^{KK}_\xi(V,W):=\langle d^{-1}\xi,~~[V,W] \rangle 
=\langle \eta, ~[V,W] \rangle =\int_{M} \eta \wedge i_{[V,W]} \mu=\int_M \xi \wedge i_Vi_W\mu\, .
\end{equation}
Here the 1-form $\eta=d^{-1}\xi$ is a primitive of the vorticity 2-form $\xi$, and
$[V,W]$ is the commutator of the vector fields $V$ and $W$  in $M$. Note that for divergence-free vector fields $V$ and $W$ their commutator satisfies the identity $i_{[V,W]}\mu=d(i_Vi_W\mu)$, which implies the last equality in (\ref{KK}).
}
\end{remark}

\medskip
\begin{figure*}
\begin{center}
\begin{tabular}{c|c|c|c|c}
support & vorticity & symplectic & evolution & Hamiltonian\\
codim & types & structure & equation & ~\\
\hline
\hline
   &   &  & & \\
   &    smooth          &   $\omega^{KK}_\xi(V,W)$& vorticity & energy\\
0 &  vorticities $\xi$ & $=\int_M \xi\wedge i_V i_W\mu $ & Euler equation&  $H=\frac 12\int_M (v,v)\,\mu$ \\
   &  &  & $\partial_t \xi=-L_v \xi$ & \\
\hline
   &   &  & & \\
   &  vortex  & $\omega_{\partial_\Gamma\wedge\alpha}(V,W)$&Euler $\Rightarrow$ Birkhoff-Rott & \\
1 & sheets  $\partial_\Gamma\wedge\alpha$ &  $=\int_\Gamma \alpha\wedge  i_Vi_W \mu$ & LIA -- ?& $H=\,?$\\
   &   &  & & \\
\hline
   &   &  & & \\
    &   2D: point vortices    &   $\omega_{(\kappa_j,z_j)} $ & Euler $\Rightarrow$
    Kirchhoff & $H=$ Kirchhoff   \\
 &   $\sum \kappa_j \delta_{z_j}$   &  $=\sum \kappa_j \,dx_j\wedge dy_j$ & LIA=0 & 
Hamiltonian $\mathcal H$ \\
    &   ---------------------   &   ---------------------  &  --------------------- & 
  ---------------------  \\
2 & 3D:  filaments &    $\omega^{MW}_\gamma (V,W)$  & LIA: binormal eqn & \\  
    &  $C\cdot \delta_\gamma$  &  $=\int_\gamma i_Vi_W\mu$ & 
    $\partial_t\gamma=\gamma'\times\gamma''$&  $H={\rm length}(\gamma)$\\
    &   ---------------------   &   ---------------------  &  --------------------- &   
  ---------------------  \\
& any D:  membranes  & $\omega^{MW}_P (V,W)$   & LIA: skew mean  & \\
& (higher filaments) &    $=\int_P i_Vi_W\mu$   & curvature flow &$H={\rm volume}(P)$\\
& $C\cdot \delta_P $ &      &  $\partial_t P=J({\bf MC}(P))$  &
\end{tabular}
\end{center}
\end{figure*}

In this paper we deal with singular vorticities.
Regular vorticities have support of full dimension, i.e. of codimension 0 in $M$, while singular ones have
support of codim~$\geq 1$.
Singular vorticities form a subspace in (a completion of) the  dual space 
$\Vect_\mu(M)^*=d\Omega^1(M)$.
Note that since vorticity is a (possibly singular) 2-form (more precisely, a 
  current of degree 2), its support has to be
of codim~$\leq 2$. (E.g., if support is of codim~$=3$, it corresponds to a
  singular 3-form. We refer to \cite{Federer} for details on  currents.)
  
The most interesting cases of support are of codimension 1 (vortex sheets) and
  codimension 2 (point vortices in 2D, vortex filaments in 3D and vortex membranes, or higher  filaments, for any dimension).
  In the next sections we  start with the codimension 2 case, and deal 
with the codimension 1 case towards the end of the paper. The main types of singular vorticities, as well as related to them symplectic structures and Hamiltonian equations studied below, are summarized in the  table above.
While the goal of this paper is partially expository, and various facts on vortex filament dynamics are scattered in the extensive literature,  certain results presented below (in particular, the Hamiltonian framework for vortex sheets, the skew-mean-curvature flows and the LIA in any dimension) are apparently new.

\bigskip


\smallskip

\section{Singular vorticities in codimension 2: point vortices and filaments}\label{filaments}

\bigskip


\subsection{Point vortices in 2D}

\medskip

Let $M$ be the 2-dimensional Euclidean plane $\R^2$.
Let the 2D vorticity $\xi$ be supported on $N$ point vortices:
$\xi=\sum^N_{j=1} \kappa_j\,\delta_{z_j}=\sum^N_{j=1} \kappa_j\,\delta( z- z_j)$, where
$ z_j=( x_j,  y_j)$ are coordinates of the $j$th point vortex 
in  $\R^2=\C^1$ with the standard area form  
$\mu=d  x\wedge d  y$. Kirchhoff's theorem states that the evolution 
of vortices according to the Euler equation is described by the system
\begin{equation}\label{kirch}
\kappa_j \dot x_{j}
=\frac{\partial \mathcal H}{\partial y_{j}},\qquad
\kappa_j \dot y_{j}
=-\frac{\partial \mathcal H}{\partial x_{j}},\qquad 
1\le j\le N\,.
\end{equation}
This is a Hamiltonian system in  $\R^{2N}$ with the  Hamiltonian function
$$
\mathcal H=-\frac{1}{4\pi}\sum^N_{j<k}\kappa_j\kappa_k \, 
\ln | z_j-  z_k|^{2}
$$
and the Poisson structure is given by the bracket
\begin{equation}\label{poissonvortices}
\{f,g\}=\sum^N_{j=1}\frac{1}{\kappa_j}
\left( \frac{\partial f}{\partial  x_j}
\frac{\partial g}{\partial  y_j}
-\frac{\partial f}{\partial  y_j}
\frac{\partial g}{\partial  x_j}\right)\,.
\end{equation}
\medskip

One can derive the above Hamiltonian dynamics from the  2D Euler equation  in the vorticity form
$
\partial_t \xi =\{\psi,\xi\}\,,
$
where $\xi$ is a vorticity function in $\R^2$ and
the stream function (or Hamiltonian) $\psi$ of the flow satisfies
$\Delta \psi=\xi$, see e.g. \cite{Khesin}. While the system (\ref{kirch}) goes back to Kirchhoff,  its properties 
for various numbers of point vortices and versions for different manifolds have been of constant interest, see e.g. \cite{Kimura, Khesin}. The cases of $N=2$ and $N=3$ point vortices are integrable, while those of $N\ge 4$ are not. The subtle issue of in what sense the equation of $N$ point vortices approximates the 2D Euler equation as $N\to\infty$ is  treated, e.g., in \cite{MP}.\footnote{The Hamiltonian system for the point vortex approximation of the 2D Euler equation is reminiscent of the Calogero-Moser system for  the  evolution of poles of rational solutions of the KdV equation.}

The origin of the Poisson bracket (\ref{poissonvortices}) is explained by the following

\begin{prop}{\rm (\cite{MW})}
The Poisson bracket  (\ref{poissonvortices}) is defined by the Kirillov-Kostant symplectic structure 
on the coadjoint orbit of the (singular) vorticity $\xi=\sum^N_{j=1} \kappa_j\,\delta( z- z_j)$
in  (the completion of) the dual of the Lie algebra  $\mathfrak g={\SVect}(\R^2)$ 
of divergence free vector fields in $\R^2$.
\end{prop}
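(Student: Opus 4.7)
My plan is to identify the coadjoint orbit explicitly, compute the restriction of the Kirillov--Kostant form (\ref{KK}) to it, and check that this restriction inverts to the bracket (\ref{poissonvortices}).

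\textbf{Step 1: Describe the orbit.} The coadjoint action of $\SDiffM$ on vorticities (viewed as 2-forms/currents) is by pushforward, so an element $\phi\in\Diff_\mu(\R^2)$ sends $\sum_j \kappa_j\,\delta_{z_j}$ to $\sum_j \kappa_j\,\delta_{\phi(z_j)}$. Since any $N$-tuple of distinct points in $\R^2$ can be taken to any other by a volume-preserving diffeomorphism, the orbit through $\xi=\sum_j\kappa_j\delta_{z_j}$ is (an open subset of) the configuration space $\{(z_1,\ldots,z_N)\in(\R^2)^N\}$ of $N$ labeled point vortices with fixed strengths $\kappa_j$; the strengths themselves are Casimirs.

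\textbf{Step 2: Identify tangent vectors.} A vector field $V\in\SVect(\R^2)$ produces the variation $-L_V\xi$ of $\xi$, and for a sum of delta-forms this variation is precisely the infinitesimal displacement of each support point by the value of $V$ there: the tangent vector on the orbit corresponding to $V$ has components $\delta z_j=V(z_j)\in T_{z_j}\R^2$. In particular the KK form will depend only on the values $V(z_j), W(z_j)$, not on the extension of the vector fields away from the vortex loci.

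\textbf{Step 3: Evaluate (\ref{KK}).} Since $\xi$ is concentrated at the $z_j$, one has
\[
\omega^{KK}_\xi(V,W)=\int_{\R^2}\xi\wedge i_V i_W\mu=\sum_{j=1}^N \kappa_j\,(i_V i_W\mu)(z_j).
\]
For $\mu=dx\wedge dy$ the function $i_V i_W\mu$ is (up to sign) the signed area of the parallelogram spanned by $V$ and $W$, so its value at $z_j$ is $\pm(V_j^x W_j^y - V_j^y W_j^x)$. Using the identification of tangent vectors from Step~2, i.e.\ $V_j^x=\delta_V x_j$ and $V_j^y=\delta_V y_j$, this is exactly $\pm(dx_j\wedge dy_j)(V,W)$, so the restricted symplectic form on the orbit equals $\pm\sum_j \kappa_j\,dx_j\wedge dy_j$.

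\textbf{Step 4: Invert to the Poisson bracket.} Inverting the Darboux-type form $\sum_j \kappa_j\,dx_j\wedge dy_j$ yields exactly the Poisson brackets $\{x_j,y_k\}=\kappa_j^{-1}\delta_{jk}$, which is (\ref{poissonvortices}) (the sign choices in Steps 3--4 being determined by the convention already used to derive the Euler equation $\partial_t\xi+L_v\xi=0$). The main subtle point is Step~2: one must justify that $-L_V\xi$ for the distributional $\xi$ is well-defined and that it is parametrized by the finite-dimensional data $\bigl(V(z_j)\bigr)_{j=1}^N$, so that two vector fields agreeing at the vortex sites give the same tangent vector to the orbit. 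Once this is granted, the remaining computation is algebraic and dimension-free, and the same mechanism will reappear for higher-dimensional vortex filaments in what follows.
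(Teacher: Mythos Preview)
Your proof is correct and follows essentially the same approach as the paper: identify tangent vectors to the orbit with tuples $(V(z_j))_j$, plug into the formula $\omega^{KK}_\xi(V,W)=\int_{\R^2}\xi\wedge i_Vi_W\mu=\sum_j\kappa_j\,\mu(V_j,W_j)$, and invert to obtain the bracket with reciprocals $1/\kappa_j$. The paper's version is terser---it simply asserts the identification of tangent vectors and writes down the sum---whereas you spell out Steps~1, 2 and 4 explicitly, but the content is the same.
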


\bigskip
\begin{proof} Indeed, a vector tangent to the coadjoint orbit of such a singular vorticity 
$\nu$ can be regarded as a 
collection of vectors $\{ V_j \}$ in $\R^2$  attached at points $z_j$.  Then for a pair of tangent vectors the corresponding Kirillov-Kostant symplectic structure becomes the weighted sum of the  corresponding contributions at each point vortex $z_j$ with strengths $\kappa_j$ as the  corresponding weights:
$$
\omega^{KK}_\xi(V,W):=\int_{\R^2} \xi \wedge i_Vi_W\mu
=\sum_j  \kappa_j\,\mu(V_j, W_j)\, .
$$
The Poisson bracket  (\ref{poissonvortices}), being the inverse of the symplectic structure, has the reciprocals of the weights $\kappa_j$. 
\end{proof}

\bigskip

\bigskip


\subsection{Vortex filaments in 3D}

By passing from 2D to 3D we move from point vortices to filaments. Vortex filaments are  curves in $\R^3$ being supports of singular vorticity fields. They are governed by the Euler equation
  \begin{equation}\label{xiequation}
  \partial_t \xi+L_v \xi=0\,,
  \end{equation}
  where $v={\rm curl}^{-1}\xi$ and the vorticity field (or a 2-form) $\xi$ has support on a curve $\gamma\subset \R^3$.
  (Note that the exactness of the form $\xi$ implies that $\gamma$ is a boundary of a 2-dimensional domain, i.e., in particular, its components are either closed  or go to infinity.)
The  Euler dynamics is nonlocal in terms of the vorticity field, or 2-form, $\xi$
  since it requires finding the field $v={\rm curl}^{-1}\xi$.
 
The  localized induction approximation (LIA)  of the vorticity motion is a procedure which 
allows one to keep only the local terms in the vorticity Euler equation, as we discuss below. In $\R^3$ the corresponding evolution is described by the {\it vortex filament equation} 
\begin{equation}\label{binormal}
\partial_t \gamma=\gamma'\times \gamma''\,,
\end{equation}
where 
$\gamma(\cdot, t)\subset \R^3$ is a time-dependent arc-length parametrized space curve. For an arbitrary parametrization the filament equation becomes $\partial_t\gamma=k \cdot \mathbf b$,
where $k $ and $\mathbf{ b=t\times n}$ stand, respectively, for
the  curvature value and binormal unit vector of the  curve $\gamma$ at the corresponding point.
This equation is often called the {\it binormal equation}.
(The equivalence of the two equation forms is straightforward: for an arc-length parametrization the tangent vectors
${\bf t}= \partial \gamma/\partial \theta=\gamma'$ have unit length and the acceleration vectors are
$\gamma''=\partial {\bf t}/\partial \theta=k\cdot {\bf n}$,
i.e. $\partial_t \gamma=\gamma'\times \gamma''$ becomes $\partial_t\gamma=k \cdot \mathbf b$,
where the latter equation is valid for an arbitrary parametrization.)

\begin{remark}
{\rm 
Here we briefly recall  the LIA derivation of the equation (\ref{binormal}) in 3D, see e.g. \cite{Calini}, and give more details in Section \ref{LIAanyD} and Appendix.

Assume that the velocity distribution $v$ in  $\R^3$
has vorticity $\xi=\mathrm{curl}\,v$  concentrated on a smooth embedded arc-length parametrized
  curve $\gamma\subset \R^3$ of length $L$. Then
$$
\xi(q,t)=C\int_0^L\delta(q-\gamma(\theta,t))\frac{\partial \gamma}{\partial \theta}\,d\theta\,.
$$
Here $\delta$ is the delta-function in $\R^3$ and the constant $C$, the strength of the filament, is the flux of $\xi$ across (or, which is the same,  the circulation of $v$ over) a small contour around the core of the vortex  filament $\gamma$. Note that the exactness of the 2-form $\xi$ also implies that the filament strength $C$ is indeed constant along $\gamma$.

The Biot-Savart law allows one to represent the velocity field
in terms of its vorticity:
\begin{equation}\label{3DBiot}
v(q,t)=-\frac{1}{4\pi}\int_M\frac{(q-\tilde q)\times \xi(\tilde q)}{\|q-\tilde q\|^3}\,d^3\tilde x
=-\frac{C}{4\pi}\int_\gamma\frac{q-\gamma(\tilde \theta, t)}{\|q-\gamma(\tilde \theta, t)\|^3}\times \frac{\partial \gamma}{\partial \tilde \theta}\,d\tilde \theta\,.
\end{equation}
By utilizing the fact that the time evolution of the curve $\gamma$ is given by the velocity field $v$ itself: 
$\frac{\partial \gamma}{\partial t}(\theta, t)=v(\gamma(\theta, t),t)$ we come to the 
following integral:
$$
\frac{\partial \gamma}{\partial t}(\theta, t)
=-\frac{C}{4\pi}\int_\gamma\frac{\gamma(\theta, t)-\gamma(\tilde \theta, t)}{\|\gamma(\theta, t)-\gamma(\tilde \theta, t)\|^3}\times \frac{\partial \gamma}{\partial \tilde \theta}\,d\tilde \theta\,.
$$
This integral is divergent with the main singularity coming from the points on the curve $\gamma$ close to each other on the curve (i.e. with small $|\tilde \theta-\theta |$). 
Given $\theta$ the Taylor expansion of $\tilde\gamma(\theta)$ in $(\tilde \theta-\theta)$ yields
$$
\frac{\partial \gamma}{\partial t}(\theta, t)
=\frac{C}{8\pi}\left[\frac{\partial \gamma}{\partial \theta}\times \frac{\partial^2 \gamma}{\partial \theta^2}\int_0^L\frac{d\tilde \theta}{|\theta-\tilde \theta|}+\mathcal O(1)\right]\,.
$$
Since the right-hand side is divergent, for a small $\epsilon$ we take the truncation of the
integral by considering only the part $|\tilde \theta-\theta |>\epsilon$ of the integration domain $[0,L]$.
The corresponding integral is of order $\ln \epsilon$. Rescaling time by means of 
$t\to t(C/8\pi)\ln\epsilon$   we are keeping only the singularity term and neglecting others as $\epsilon\to 0$.

This way we obtain the vortex filament equation (\ref{binormal}). It is also called the localized induction approximation (LIA) since the velocity field ${\partial \gamma}/{\partial t}$ of the curve $\gamma$ is induced by its own vorticity, i.e. vorticity supported on the curve, while only  parts of the curve sufficiently close to a given point $\gamma(\theta)$ 
determine the velocity field  at that point.  (We discuss  the above limit in higher dimensions in the next two sections.) Note that in 2D point vortices interact with each other but not with themselves (as manifested by the Kirchhoff Hamiltonian), i.e. the localization in 2D would give the zero LIA equation.
}
\end{remark}

\begin{remark}
{\rm 
This binormal equation is known to be Hamiltonian relative to the Marsden-Weinstein symplectic structure 
on non-parametrized space curves in $\R^3$.
Recall that the  {\it Marsden-Weinstein symplectic structure}  is defined {\it on oriented curves} $\gamma$ by
\begin{equation}\label{MWss}
\omega^{MW}_\gamma(V,W):=\int_\gamma i_V i_W\mu=\int_\gamma \mu (V,W,\gamma')\,d\theta
\end{equation}
where $V$ and $W$ are two vector fields attached to the curve $\gamma$ 
and regarded as variations of this curve, while the volume form $\mu$ is evaluated on the three vectors $V,W$ and $\gamma'={\partial \gamma}/{\partial \theta}$. One can see that 
this integral does not depend on the parameter $\theta$ on the curve $\gamma(\theta)$.

Equivalently, this symplectic structure can be defined by means of the operator $J$ of almost complex structure on curves: any variation, i.e. vector field
attached at the oriented curve $\gamma$, is rotated by the operator $J$ in the planes orthogonal to 
$\gamma$ by $\pi/2$ in the positive direction (which makes a skew-gradient from a gradient field), see details in \cite{MW, AK}.

One can show that this is the Kirillov-Kostant symplectic structure on the coadjoint orbit of the vorticity $\xi_\gamma$ supported on the curve $\gamma$ and understood as a point in a completion of the dual of the Lie algebra: $\xi_\gamma\in d\Omega^1(\R^3)=\SVect(\R^3)^*$.
The pairing of $\gamma$ and a divergence-free vector field $V$ can be defined directly as
$\langle \gamma, V\rangle:=\mathrm{Flux }\, V|_{\sigma}$, where $\sigma$ is an oriented surface whose boundary is $\gamma=\partial \sigma$.
}
\end{remark}

\begin{remark}\label{lengthH}
{\rm 
As discussed above, the Euler equation (\ref{xiequation}) is Hamiltonian  with the Hamiltonian  function given by the kinetic energy. The energy $E(v)=\frac 12\int_M (v,v)\,\mu$ is local in terms of velocity fields, but it is nonlocal in terms of vorticities: $E(\xi)=\frac 12 \int_M ( \mathrm{curl}^{-1}\xi,\mathrm{curl}^{-1}\xi)\,\mu$. 
It turns out that after taking the localized induction approximation, when we keep only the local terms, the filament equation remains Hamiltonian with respect to the same Marsden-Weinstein symplectic structure, but with a different Hamiltonian (see Sections \ref{MCanyD}, \ref{LIAanyD} and Appendix).
  
The corresponding new Hamiltonian functional turns out to be the length functional of the curve: 
  $H(\gamma)=\text{length}(\gamma)=\int_\gamma \|\gamma'(\theta)\|\,d\theta$, see e.g. \cite{AK}. 
  Indeed, the variational derivative, i.e. the ``gradient,'' of this length functional $H$ 
  is $\delta H/\delta\gamma =  -\gamma''=-\mathbf{t}' = - k \cdot\mathbf{n}$, where $\mathbf{t}$ and $\mathbf{n}$
  are, respectively, the unit tangent and normal fields to the curve $\gamma$. The dynamics is given by the corresponding skew-gradient,
  which is obtained from $\delta H/\delta\gamma$ by applying $J$ for the above symplectic structure.
This operator, rotating  the plane orthogonal to  $\mathbf{t}$ by $\pi/2$,
sends $-k \cdot\mathbf{n}$ to  $k \cdot\mathbf{b}$. 
In the next section we discuss how the Marsden-Weinstein symplectic structure 
and binormal equation are generalized to higher dimensions.

The LIA evolution is close to the actual Euler evolution of a vortex filament only for a short time (when the local term in dominant). For large times the LIA filament may, e.g., self-intersect, while the incompressible Euler dynamics has a frozen-in vorticity and it does not allow topology changes of the filaments.
}
\end{remark}

\bigskip


\section{Vortex membranes and skew-mean-curvature flow}\label{MCanyD}

For a smooth hypersurface in the Euclidean space $\R^n$ its mean curvature is a function 
on this surface. Similarly, one can define a mean curvature vector field 
for a smooth submanifold of any dimension $l$.

\begin{defn}\label{defMC}
{\rm a) Let $P$ be a smooth  submanifold of dimension $l$ in the Euclidean space $\R^n$.
Its second fundamental form at a point $p\in P$ is a map from the tangent space $T_pP$ to  the normal space $N_p P$. The  {\it mean curvature vector} ${\bf MC}(p)\in N_pP$ is the normalized trace of the second fundamental form at $p$, i.e. the trace divided by $l$.

\smallskip

b) Equivalently, the {\it mean curvature vector} ${\bf MC}(p)\in N_pP$ is the mean value of the curvature vectors of geodesics in $P$ passing through the point $p$ when we average over the sphere $S^{l-1}$ of all possible unit tangent vectors in $T_p P$ for these geodesics.
}
\end{defn}


\medskip

Now consider a closed oriented embedded submanifold (membrane)  $P$ of codimension 2 in $\R^n$ 
(or more generally, in a Riemannian manifold $M^n$) with $n\ge 3$
and the Marsden-Weinstein (MW) symplectic structure  on such submanifolds. 
Recall that the Marsden-Weinstein symplectic structure $\omega^{MW}$ on membranes of codimension 2 in $\R^n$ (or in any $n$-dimensional manifold) with a volume form $\mu$ is defined 
similar to the 3-dimensional case: two variations of a membrane $P$ are regarded as a pair of normal vector fields attached to the membrane $P$ and the value of the symplectic structure on them is
$$
\omega^{MW}_P(V,W):=\int_P i_V i_W\mu\,.
$$
Here $i_V i_W\mu$ is an $(n-2)$-form integrated over $P$.
Note that this symplectic structure can be thought of as the ``total" averaging 
of the symplectic structures in each normal space $N_pP$ to $P$. 
(The Marsden-Weinstein structure in higher dimensions was studied in \cite{AK, Vizman}.)

\smallskip

Now we define the Hamiltonian function on those membranes
by taking their $(n-2)$-volume: $H(P)={\rm volume}(P)=\int_P \mu_P$, where $\mu_P$ is the volume form of the  metric induced from $\R^n$ to $P$.
For instance, for a closed curve $\gamma$ in $\R^3$ this Hamiltonian is the length functional discussed in Remark \ref{lengthH}.
Note that to define the MW structure one only needs the volume form on $\R^n$, while to define the Hamiltonian one does need a metric. 

\begin{thm} 
In any dimension $n\ge 3$ the Hamiltonian vector field for the Hamiltonian $H$ and the Marsden-Weinstein  symplectic structure on codimension 2 membranes $P\subset \R^n$ is 
$$
v_{H}(p)=C_n\cdot J({\bf MC}(p))\,,
$$ 
where $C_n$ is a constant, $J$ is the operator of positive $\pi/2$ rotation in every normal space $N_pP$ to $P$, and ${\bf MC}(p)$ is the mean curvature vector to $P$ at the point $p$.
\end{thm}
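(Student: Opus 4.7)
The plan is to compute both sides of Hamilton's equation $\omega^{MW}_P(v_H, W) = dH_P(W)$ and read off $v_H$. First I would calculate the differential of the volume Hamiltonian using the classical first variation formula for $l$-dimensional submanifolds ($l = n-2$): for a normal variation field $W$ attached to $P$,
\[
dH_P(W) = \frac{d}{dt}\Big|_{t=0}\mathrm{vol}(P_t) = \int_P \mathrm{div}_P(W)\,d\mu_P = -l\int_P \langle W, \mathbf{MC}(p)\rangle\, d\mu_P,
\]
where the last equality uses that the full trace of the second fundamental form is $l\cdot\mathbf{MC}$, by the normalization in Definition~\ref{defMC}. A sanity check: in the curve case $l=1$, this recovers $\delta H/\delta\gamma = -k\mathbf{n} = -\mathbf{MC}$ from Remark~\ref{lengthH}.

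Next I would analyze the Marsden--Weinstein form pointwise. At $p\in P$ the normal space $N_pP$ is two-dimensional; pick a positively oriented orthonormal frame $e_1,\dots,e_l$ of $T_pP$ and $n_1,n_2$ of $N_pP$ so that $\mu = e_1^\flat\wedge\cdots\wedge e_l^\flat\wedge n_1^\flat\wedge n_2^\flat$. A direct computation of the interior products for purely normal vectors $V = V_1 n_1 + V_2 n_2$ and $W = W_1 n_1 + W_2 n_2$ shows
\[
i_V i_W \mu\big|_P = \omega_N(V,W)\,\mu_P,
\]
where $\omega_N = n_1^\flat\wedge n_2^\flat$ is the oriented area form on $N_pP$. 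Since $J$ is the positive $\pi/2$ rotation in $N_pP$, one has $\omega_N(V,W) = \langle JV,W\rangle$. Hence
\[
\omega^{MW}_P(v_H, W) = \int_P \langle J v_H,\, W\rangle\, d\mu_P.
\]

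Finally, comparing the two expressions and using that $W$ is an arbitrary normal vector field along $P$, Hamilton's equation forces $Jv_H = -l\cdot \mathbf{MC}$ pointwise, and applying $-J$ (using $J^2 = -\mathrm{Id}$) gives $v_H = l\cdot J(\mathbf{MC})$, i.e.\ the claimed formula with the constant $C_n = l = n-2$ (up to the orientation convention on the normal plane, which can absorb a sign into $J$).

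The routine ingredient is the first variation of the $l$-volume; the main care is in the pointwise decomposition of $\mu$ into $\mu_P\wedge\omega_N$ and in tracking orientation conventions so that $\omega_N(\cdot,\cdot) = \langle J\cdot,\cdot\rangle$ with the same $J$ that appears in the statement. The expected obstacle is purely bookkeeping: normalizing $\mathbf{MC}$ (trace vs.\ normalized trace) to match the constant $l = n-2$, and verifying that the operator $J$ intrinsic to the Marsden--Weinstein structure (as described in Remark on the skew-gradient in dimension 3) indeed coincides with positive $\pi/2$ rotation in the oriented normal 2-plane for any $n$.
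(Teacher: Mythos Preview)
Your proposal is correct and follows essentially the same approach as the paper: compute the first variation of the $(n-2)$-volume to identify the gradient as a constant times $\mathbf{MC}$, then use that the Marsden--Weinstein form is pointwise the area form on the normal $2$-plane so that the Hamiltonian vector field is $J$ applied to this gradient. Your explicit frame computation showing $i_Vi_W\mu|_P=\omega_N(V,W)\,\mu_P$ is exactly what the paper packages into the sentence ``the MW symplectic structure is the averaging of the symplectic structures in all $2$-dimensional normal planes $N_pP$''; the paper then quotes the first variation formula, deriving it via $d/dt\,\det(g_{ij})$ in a local chart and integration by parts. The only discrepancy is the numerical constant: the paper obtains $C_n=4-2n$ whereas you obtain $C_n=n-2$ (up to sign); the factor of $2$ arises because the paper differentiates $\det(g_{ij})$ rather than $\sqrt{\det(g_{ij})}$, but since the theorem only asserts that $C_n$ is \emph{some} constant this does not affect correctness.
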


This statement, as well as the proof below, is valid for any Riemannian manifold $M$.
The expression of  $v_H$ via the trace of the second fundamental form without reference to the mean curvature appeared in \cite{Vizman}, Proposition 3. For 4D this theorem was obtained in \cite{Shash}.
Here and below we use the notation $C_n$ for {\it some} constant depending on the dimension  in the case of $\R^n$, or on the geometry of $M^n$ in the general case, but not on the membrane $P$.
In the theorem above $C_n=4-2n$.

\begin{proof}
Since the MW symplectic structure is the averaging of the symplectic structures in all 2-dimensional normal planes
$N_pP$, the skew-gradient (i.e. the Hamiltonian vector)
for any functional on submanifolds $P$
is obtained from its gradient field attached at $P$ by the application of the almost complex structure $J$.
The latter is the positive rotation by $\pi/2$ in each normal plane.  (Orientations of $\R^n$
and $P$ determine the orientation of $N_pP$ and hence the positive direction of rotation
in $N_pP$ is well defined.)
Thus to prove $v_{H}(p)={\rm const}\cdot J({\bf MC}(p))$
we need to show that the gradient, i.e. the first variation, of the volume functional $H(P)$ is 
$$
\frac{\delta H}{\delta P}(p)={\rm const}\cdot  {\bf MC}(p)\,.
$$

On the other hand, the fact that the mean curvature vector field is the gradient for the volume functional is well-known, see e.g. \cite{EEL}. A quick argument follows from the observation that for a variation $P_t$ 
defined by a normal vector field $W$ attached at $P$  of dimension $l$ the volume changes at the rate
$$
\frac{d}{dt}H(P_t)=-2l\,\int_P (W, {\bf MC})\,\mu_P\,.
$$
The latter equality can be verified for a variation confined to a local chart parametrizing a neighborhood of a point $p\in P$. Let $\partial_i$ be coordinate unit vectors in this chart and $\phi$  is the chart parametrization map. Then the induced metric on $P$ in local coordinates around the point $p$ is  $g_{ij}=(\phi_*\partial_i, \phi_*\partial_j)$
and  the volume variation is $d/dt \det(g_{ij})$.
By choosing the coordinates so that $g_{ij}(p)=\delta_{ij}$ at $t=0$ one has 
$$
d/dt \det(g_{ij})={\rm tr} (d/dt\, g_{ij})={\rm tr}  (L_W g_{ij})
=2\,{\rm tr} (\nabla_W \phi_*\partial_i, \phi_*\partial_j) 
=2\,{\rm tr} (\nabla_{\phi_*\partial_i} W, \phi_*\partial_j)\,.
$$ Then using integration by parts 
 one has 
 $$
 \frac{d}{dt}H(P_t)=2\int_P {\rm tr} (\nabla_{\phi_*\partial_i} W, \phi_*\partial_j)\,\mu_P
 =-2\int_P {\rm tr} (W, \nabla_{\phi_*\partial_i} \phi_*\partial_j)\,\mu_P 
=-2l\int_P (W, {\bf MC})\,\mu_P\,,
$$
since ${\bf MC}:={\rm tr} (\nabla_{\phi_*\partial_i} \phi_*\partial_j)/l$. By applying this to $P$ of dimension $n-2$ one obtains $C_n=-2(n-2)=4-2n$.
\end{proof}

The mean curvature flow is often used  to construct minimal surfaces in $\R^3$.
For hypersurfaces  it is directed along the normal, its value is given by the mean curvature, and it minimizes the hypersurface volume in the fastest way. 

\begin{defn}
{\rm
The {\it higher vortex filament equation} on submanifolds of codimension 2 in $\R^n$ 
is given by the {\it skew-mean-curvature flow}:
\begin{equation}\label{skew-mean}
\partial_t P(p)=-J({\bf MC}(p))\,.
\end{equation}
}
\end{defn}

Note that the skew-mean-curvature flow introduced this way differs by the $\pi/2$-rotation from the mean-curvature one. Respectively, it does not stretch the submanifold while moving its points orthogonally to the mean curvatures. In particular, the volume of the submanifold $P$ is preserved under this evolution, as it should, being the  Hamiltonian function of the corresponding dynamics.

\begin{remark}
{\rm
For dimension $n=3$ the mean curvature vector is the curvature vector $k  \cdot{\bf n}$ of a curve $\gamma$:
${\bf MC}=k \cdot {\bf n}$, while the skew-mean-curvature flow becomes the binormal equation:
$\partial_t \gamma=-J(k \cdot  {\bf n})=k\cdot {\bf b}$, which for arc-length parametrization is given by the equation
$\partial_t \gamma=\gamma'\times \gamma''$.
Unlike the case $n=3$, for larger $n\ge 4$ the skew-mean-curvature flow is apparently non-integrable. 
}
\end{remark}

\begin{problem}
{\rm
a) Is there an analogue of the Hasimoto transformation for any $n$ relating the higher vortex filament equation with the higher-dimensional (and already non-integrable) nonlinear Schr\"odinger equation (NLS)?

b) Is there an analogue for any $n$ of the gas dynamics equation equivalent to the vortex filament one in 3D, see \cite{AK}.

c) Are there integrable approximations of the higher filament equation (\ref{skew-mean}).
}
\end{problem}

For $n=4$ the question a) was posed in \cite{Shash}. Recall that for $n=3$ at any time $t$ the Hasimoto transformation sends a curve $\gamma(\theta)$ with curvature $k(\theta)$ and torsion $\tau(\theta)$
to the wave function $\psi(\theta)=k(\theta)\exp\{i\int^\theta \tau(\zeta)\,d\zeta\}$ satisfying the 1-dimensional NLS: $i\partial_t\psi+\psi''+\frac 12|\psi|^2\psi=0$.

\bigskip


\section{The localized induction approximation (LIA) in higher dimensions}\label{LIAanyD}

Let $P^{n-2}\subset \R^n, \, n\ge 3$ be a closed oriented submanifold of codimension 2.
Consider the vorticity 2-form $\xi_P$
supported on this submanifold: $\xi_P=C\cdot\delta_P$. We will call $P$ a higher(-dimensional) vortex filament or membrane.
Note that the exactness (or closedness) of the 2-form $\xi_P$ implies that the membrane strength $C$ is constant, while the integrals of $\xi_P$ over 2-dimensional  surfaces with boundary not intersecting $P$ are well-defined and depend only on the homology class of the boundary in the complement to $P$.
\smallskip

We would like to find the divergence-free vector field $v$ which has a prescribed  vorticity
2-form $\xi$, i.e. $\xi=dv^\flat\in \Omega^2(\R^n)$. In dimension 3, where vorticity can be regarded as a vector field, the corresponding vector potential $v$ in $\R^3$ is reconstructed by means of the Biot-Savart formula (\ref{3DBiot}). Now we are looking for its analogue in any dimension $n\ge 3$.
The statements  we are discussing in this section
were obtained for $n=4$   in  \cite{Shash}. The extentions of proofs to the general case  are presented in Appendix.

\smallskip

The singular $\delta$-type vorticity 2-form $\xi_P$ is completely defined by the submanifold $P$.
Denote by $G(q,p)$ the Green function of the Laplace operator in $\R^n$, i.e. given a point $q\in \R^n$ one has $\Delta_p G(q,p)=\delta_q(p)$, the delta-function supported at $q$.

\begin{thm}\label{dBiot}
For any dimension $n\ge 3$ the  divergence-free vector field $v$ in $\R^n$ satisfying ${\rm curl} \,v= \xi_P$ is given by the following generalized Biot-Savart formula: for any point $q\not\in P$ one has
$$
v(q):=C_n\cdot\int_P J\left({\rm Proj}_N\nabla_p G(q,p)\right)\,\mu_P(p)\,,
$$
where ${\rm Proj}_N\nabla_p G(\cdot,p)$ is the orthogonal projection of the gradient $\nabla_p G(\cdot,p)$
of the Green function $G(\cdot,p)$ to the fiber  $N_pP$ of the normal bundle to $P$ at $p\in P$, the operator $J$ is the positive rotation around $p$ by $\pi/2$ in this 2-dimensional space $N_pP$, and
$\mu_P$ is the induced Riemannian $(n-2)$-volume form on the submanifold $P\subset \R^n$.
\end{thm}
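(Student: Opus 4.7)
The plan is to derive the formula from a Hodge-theoretic inversion of the conditions $dv^\flat = \xi_P$, $\delta v^\flat = 0$, and then verify it by a direct local computation that reduces to the classical 2D Biot--Savart formula in the normal planes. For the derivation, I would seek $v^\flat$ of the form $\delta\alpha$ where $\alpha$ is a 2-form solving $\Delta\alpha = \xi_P$ with $d\alpha = 0$; then automatically $dv^\flat = d\delta\alpha = \Delta\alpha = \xi_P$ and $\delta v^\flat = 0$. Since $P$ is closed, $\xi_P$ is a closed current, so $d\alpha = 0$ holds for the natural convolution solution (via integration by parts using $d_qG(q,p) = -d_pG(q,p)$). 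In Cartesian coordinates on $\R^n$ the Hodge Laplacian on forms acts componentwise as the scalar Laplacian, so
\[
\alpha(q) \;=\; C \int_P G(q,p)\,\Omega_{N_pP}(p)\,\mu_P(p),
\]
where $\Omega_{N_pP}$ is the area 2-form on the normal fiber at $p$, viewed as a parallel 2-form in $\R^n$.

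Next, I would apply $\delta = -\star d\star$ in $q$. Pointwise in $p$, the inner $\star$ sends $\Omega_{N_pP}$ (up to sign) to the induced $(n-2)$-volume form on $T_pP$ extended as a parallel form; $d$ then differentiates $G$ in $q$, producing $\nabla_q G(q,p)$ wedged with this $(n-2)$-form; the outer $\star$ yields a 1-form whose metric dual is precisely the normal component of $\nabla_q G$ rotated by $\pi/2$ in the 2-plane $N_pP$. Using $\nabla_q G = -\nabla_p G$ to switch the differentiation variable and collecting sign and normalization factors into $C_n$ gives the stated formula.

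For a rigorous verification that bypasses the Hodge-star bookkeeping, I would check the defining properties of $v$ directly. First, $\mathrm{div}\,v = 0$ on $\R^n\setminus P$: differentiating under the integral, for each fixed $p$ the map $q \mapsto J(\mathrm{Proj}_N\nabla_pG(q,p))$ is the symplectic gradient of $G$ in the normal plane at $p$ (hence divergence-free in that plane), while its divergence in the tangential directions at $q$ vanishes because $\Delta_q G(q,p) = 0$ for $q\ne p$. Second, $dv^\flat = C\,\delta_P$: by Stokes it suffices to show that $\oint_{\partial D}v^\flat \to C$ for every shrinking family of small normal 2-disks $D\subset N_{p_0}P$ centered at a point $p_0\in P$. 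In tubular coordinates, only the integration over $p$ near $p_0$ contributes in the limit, and $G(q,p)$ restricted to the 2-plane $N_{p_0}P$ is asymptotically the 2D fundamental solution, so the integrand matches the 2D Biot--Savart field of a point vortex of strength $C$ at $p_0$, whose circulation around $\partial D$ equals $C$.

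The main technical obstacle is the Hodge-star calculation in the derivation: identifying $\star\,\Omega_{N_pP}$ with the induced $(n-2)$-volume form on $P$ and tracking all orientation and sign conventions so that the composition produces exactly $J\circ\mathrm{Proj}_N$ together with the correct constant $C_n$. The payoff of the tubular-neighborhood viewpoint is that the essential linear algebra is purely 2-dimensional and occurs in each normal fiber independently of $n$; this is what allows the $n=4$ calculation of Shashikanth to extend to arbitrary $n\ge 3$, as carried out in the Appendix.
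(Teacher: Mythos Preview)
Your derivation is essentially the paper's: both solve the vector Poisson equation arising from $dv^\flat=\xi_P$, $d^*v^\flat=0$ componentwise with the Green function and then unwind the Hodge stars in local $(t_1,\dots,t_{n-2},\nu_1,\nu_2)$-coordinates, using $*\xi_P=\mu_P$. The only organizational difference is that the paper works directly from $\Delta v^\flat=d^*\xi_P$ to the formula $v(q)=-\int_{\R^n}(*(d_pG\wedge *\xi))^\sharp\,\mu$ and then specializes to $\xi_P$, whereas you introduce the $2$-form potential $\alpha$ first and apply $\delta$ afterward; the paper in fact uses exactly your potential (called $\beta$ there) in the energy regularization of Section~7.3, so the two routes are the same computation read in two orders.

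Your supplementary verification contains two slips worth correcting. For $\mathrm{div}_q v=0$: for fixed $p$ the integrand $q\mapsto J(\mathrm{Proj}_{N_pP}\nabla_pG)$ takes values in the \emph{fixed} $2$-plane $N_pP$, and via $\nabla_pG=-\nabla_qG$ it is a planar Hamiltonian field in $q$; its $q$-divergence vanishes by equality of mixed partials, and the tangential components are identically zero, so harmonicity of $G$ is not what does the work. For the circulation: $G(q,p)=C_n|q-p|^{2-n}$ restricted to a normal $2$-plane is \emph{not} asymptotically the $2$D logarithm; what plays the role of a $2$D kernel is the fiber integral $\int_P G(q,p)\,\mu_P(p)$, and that is precisely the object requiring the $\ln\epsilon$ regularization in the LIA. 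The clean way to obtain $\oint_{\partial D}v^\flat=C$ is simply that $dv^\flat=d\delta\alpha=\Delta\alpha-\delta d\alpha=\xi_P$ holds as currents once $d\alpha=0$ is checked, which you have.
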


In other words, 
$$
v(q):=C_n\cdot\int_P {\rm sgrad}_p \left(G(q,p)|_{N_pP}\right)\,\mu_P(p)\,,
$$
by using the symplectic structure in $N_pP$. Here $G(q,p)|_{N_pP}$ is the restriction of the function 
$G(q,p)$ to the normal plane $N_pP$.
These formulas use the affine structure of $\R^n$, since in the integral 
averages vectors over $P$  and attaches the total at the point $q$. In the case of an arbitrary manifold $M^n$ the Biot-Savart formula is more complicated, and we will use a round-about way to obtain the LIA for any $M^n$, see Remark \ref{LIAgeneralM} below.

\medskip

Note that as the point $q$ approaches the membrane $P$ the vector field $v(q)$ may go to infinity. Consider the following truncation of the integral above. 
For $q\in P$ and given $\epsilon>0$ take the integral over $P$ for all points $p$ satisfying 
$\|q-p\|\ge \epsilon$, i.e. at the distance at least $\epsilon$ from $q$:
$$
v_\epsilon(q):=C_n \cdot \int\limits_{p\in P,\,\,\|q-p\|\ge \epsilon} J\left({\rm Proj}_N\nabla_p G(q,p)\right)\,\mu_P(p)\,.
$$

\begin{thm}{\rm (cf. \cite{Shash} for 4D)}
\label{prop:LIA}
For any dimension $n\ge 3$ the velocity field $v$ defined in Theorem \ref{dBiot}
has the following asymptotic of the truncation $v_\epsilon$: for $q\in P\subset \R^n$ one has
$$
\lim_{\epsilon\to 0} \frac{v_\epsilon(q)}{\ln\epsilon}=C_n\cdot J\left({\bf MC}(q)\right)\,.
$$
\end{thm}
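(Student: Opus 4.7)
The plan is to localize the integral defining $v_\epsilon(q)$ to a tiny neighborhood of $q$ on $P$, Taylor-expand $P$ to second order around $q$, and extract the $\ln\epsilon$ divergence via polar coordinates; the angular average of the quadratic second fundamental form will then produce precisely the mean-curvature vector.

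I would first fix $q\in P$ and choose Euclidean coordinates in $\R^n$ so that $q=0$, $T_qP=\R^{n-2}\times\{0\}$, and $N_qP=\{0\}\times\R^2$. Locally $P$ is then the graph $p(x)=(x,f(x))$ for $|x|<R$, where $f:\R^{n-2}\to\R^2$ satisfies $f(0)=0$, $df(0)=0$, and
$$
f(x)=\tfrac{1}{2}II_q(x,x)+O(|x|^3),
$$
with $II_q$ the second fundamental form of $P$ at $q$. Splitting $v_\epsilon(q)=v_\epsilon^{\rm loc}(q)+v^{\rm far}(q)$ at $\|p-q\|=2R$, the far part is independent of $\epsilon$ for $\epsilon<2R$ and contributes only $O(1)$ to the limit.

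Next I would expand the local integrand to the relevant order. Using the explicit Green function $G(q,p)=c_n\|p-q\|^{2-n}$ gives $\nabla_p G(q,p)=c_n(2-n)(p-q)/\|p-q\|^n$, so one must compute $\operatorname{Proj}_{N_pP}(p-q)$ to order $|x|^2$. Since $df(x)=II_q(\,\cdot\,,x)+O(|x|^2)$, the space $N_pP$ is spanned, modulo $O(|x|^2)$, by vectors $n_\alpha=e_\alpha-\sum_i\langle II_q(e_i,x),e_\alpha\rangle\, e_i$, where $\{e_\alpha\}$ is an orthonormal frame of $N_qP$ and $\{e_i\}$ of $T_qP$. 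A direct calculation then yields
$$
\langle p-q,n_\alpha\rangle=-\langle II_q(x,x),e_\alpha\rangle+\tfrac{1}{2}\langle II_q(x,x),e_\alpha\rangle+O(|x|^3),
$$
the first term coming from the tilt of $N_pP$ off $N_qP$ and the second from $f(x)$; hence $\operatorname{Proj}_{N_pP}(p-q)=-\tfrac{1}{2}II_q(x,x)+O(|x|^3)$. Combining this with $\|p-q\|^n=|x|^n(1+O(|x|^2))$, $\mu_P(p)=(1+O(|x|^2))\,d^{n-2}x$, and $J_p=J_q+O(|x|)$, the local integrand reduces to
$$
C\cdot\frac{J_q(II_q(x,x))}{|x|^n}\,d^{n-2}x+O(|x|^{3-n})\,d^{n-2}x,
$$
with $C$ a dimensional constant and the error term absolutely integrable in the $(n-2)$-dimensional chart.

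Finally, in polar coordinates $x=r\omega$, $\omega\in S^{n-3}$, $d^{n-2}x=r^{n-3}dr\,d\omega$, the factor $II_q(x,x)=r^2II_q(\omega,\omega)$ separates the radial and angular integrals:
$$
\int\limits_{\epsilon<|x|<R}\frac{II_q(x,x)}{|x|^n}\,d^{n-2}x=\Bigl(\int_\epsilon^R\frac{dr}{r}\Bigr)\int_{S^{n-3}}II_q(\omega,\omega)\,d\omega.
$$
Spherical symmetry $\int_{S^{n-3}}\omega^i\omega^j\,d\omega=(n-2)^{-1}\operatorname{vol}(S^{n-3})\,\delta_{ij}$ gives
$$
\int_{S^{n-3}}II_q(\omega,\omega)\,d\omega=\frac{\operatorname{vol}(S^{n-3})}{n-2}\operatorname{tr}(II_q)=\operatorname{vol}(S^{n-3})\cdot{\bf MC}(q),
$$
using ${\bf MC}(q)=\operatorname{tr}(II_q)/(n-2)$ from Definition \ref{defMC}. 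Since the radial integral equals $-\ln\epsilon+O(1)$, one obtains $v_\epsilon(q)=C_n\cdot J({\bf MC}(q))\,\ln\epsilon+O(1)$, whence the claimed limit. The hardest step will be the normal-projection calculation above: the first-order tilt of $N_pP$ off $N_qP$ contributes at the very same order $|x|^2$ as the naive normal component $f(x)$ of $p-q$, and missing this cancellation would yield the wrong coefficient in front of ${\bf MC}(q)$; once it is handled, every other curvature correction produces only an $O(|x|^{3-n})$ integrand, which is harmless in $(n-2)$ dimensions and contributes $O(1)$ to $v_\epsilon(q)$.
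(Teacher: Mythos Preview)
Your proof is correct and follows essentially the same route as the paper: localize, Taylor-expand to second order, show that the normal projection of $p-q$ is $-\tfrac12$ times the second-order curvature term (the paper obtains the same $-\tfrac12$ via the identity $(\mathbf t,\partial_\rho\partial_{\nu_i})=-(\partial_\rho\mathbf t,\partial_{\nu_i})$, which is your ``tilt plus $f(x)$'' cancellation in different clothing), then separate a logarithmic radial integral from a spherical average producing $\mathbf{MC}(q)$. The only cosmetic difference is that you work in graph coordinates over $T_qP$ and invoke Definition~\ref{defMC}(a) via $\int_{S^{n-3}}\omega^i\omega^j\,d\omega$, whereas the paper uses geodesic polar coordinates on $P$ and Definition~\ref{defMC}(b).
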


By reparametrizing the time variable $t\to -(C_n\cdot\ln\epsilon)t $ to absorb the logarithmic singularity we come to the following LIA equation for a higher filament
$P\subset \R^n$.

\begin{cor}\label{cor:LIA}
The LIA approximation for a vortex membrane (or higher filament) $P$ in $\R^n$  coincides with the skew-mean-curvature flow:
$$
\partial_t P(q)=-J\left({\bf MC}(q)\right)\,,
$$
where ${\bf MC}(q)$ is the mean curvature vector at $q\in P$.
In particular, the LIA equation is Hamiltonian with respect to the Marsden-Weinstein symplectic structure and Hamiltonian function given by the volume of the membrane $P$.
\end{cor}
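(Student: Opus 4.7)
The plan is to deduce the corollary by assembling the two theorems immediately preceding it, via a time reparametrization that absorbs the logarithmic singularity of Biot--Savart. The starting point is that the Euler dynamics transports the vorticity, so each point $q$ of a moving membrane $P(t)$ evolves according to the reconstructed velocity field, $\partial_t P(q) = v(q)$. By Theorem \ref{dBiot}, $v$ is given by the generalized Biot--Savart formula, but this integral diverges logarithmically for $q\in P$, which is exactly why the LIA truncation is needed in the first place.

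I would first invoke Theorem \ref{prop:LIA}, which pins down the leading singular behaviour of the truncated velocity: $v_\epsilon(q) \sim (C_n\ln\epsilon)\cdot J(\mathbf{MC}(q))$ as $\epsilon\to 0$. Substituting into $\partial_t P(q) = v_\epsilon(q)$ and applying the reparametrization $t\mapsto -(C_n\ln\epsilon)\,t$ (with the sign chosen so that the flow is non-degenerate and orientation-consistent as $\epsilon\to 0^+$) yields, in the limit, the skew-mean-curvature equation $\partial_t P(q) = -J(\mathbf{MC}(q))$, which is precisely (\ref{skew-mean}).

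For the Hamiltonian assertion, I would invoke the earlier theorem in Section \ref{MCanyD}, which identifies the skew-gradient of the volume functional $H(P)=\mathrm{volume}(P)$ with respect to the Marsden--Weinstein symplectic structure $\omega^{MW}$ as $v_H(p) = C_n\cdot J(\mathbf{MC}(p))$. Up to the same positive multiplicative constant already absorbed by the time rescaling, this is exactly the LIA vector field obtained in the previous step. Hence the LIA evolution is Hamiltonian for the pair $(\omega^{MW}, H=\mathrm{volume})$. As a sanity check, the conservation of $H$ along the flow follows automatically from the Hamiltonian structure and is also visible geometrically: the motion is normal to $P$ and equal to the skew-gradient of volume, so to first order the volume does not change.

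The only delicate step is the bookkeeping of signs and constants: tracking the factor $C_n$ appearing in both Theorem \ref{prop:LIA} and the earlier Hamiltonian theorem, and confirming that after the time rescaling they combine so as to produce a genuine Hamiltonian flow in the correct time direction (rather than its reverse). Aside from this routine check, no further content is required --- the corollary is essentially an immediate assembly of the two preceding theorems.
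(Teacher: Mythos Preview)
Your proposal is correct and follows essentially the same route as the paper: the corollary is deduced immediately from Theorem~\ref{prop:LIA} by the time reparametrization $t\to -(C_n\ln\epsilon)t$, and the Hamiltonian assertion is then read off from the earlier theorem in Section~\ref{MCanyD} identifying the skew-mean-curvature flow as the Hamiltonian vector field of the volume functional for $\omega^{MW}$. The paper in fact gives no separate proof of the corollary beyond the one-line remark about reparametrizing time, so your write-up is, if anything, more explicit than the original.
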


Consider now the energy Hamiltonian $E(v)=\frac 12\int_M (v,v)\,\mu$ for $M=\R^n$
and fast decaying divergence-free velocity vector fields $v$.
As before, let $\xi$ be the vorticity 2-form of the field $v$, i.e.  $\xi=dv^\flat$.
If the vorticity $\xi_P$ is supported on a membrane $P\subset \R^n$ of codimension 2, the corresponding energy $E(v)=\frac 12\int_{\R^n} (v,v)\,\mu$ for the velocity $v$ defined by 
${\rm curl}\,v=\xi_P$ is divergent and requires a regularization.
Consider the {\it regularized energy} 
$$
E_\epsilon(v):=\frac 12 \int_{\R^n} (v,v_\epsilon)\,\mu\,.
$$

\begin{thm}{\rm (cf. \cite{Shash} for 4D)}\label{thm:energy}
For  any dimension $n\ge 3$ the regularized energy $E_\epsilon(v)$ for the velocity of a membrane $P\subset \R^n$ has the following asymptotics:
$$
\lim_{\epsilon\to 0}  \frac{E_\epsilon(v)}{\ln\epsilon}
=C_n \cdot \int_P\mu_P=C_n\cdot {\rm volume}\,(P)\,.
$$
\end{thm}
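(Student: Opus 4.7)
The key observation is that $v_\epsilon(q) = v(q)$ whenever $d(q, P)\geq \epsilon$, since in that case every $p\in P$ automatically satisfies $\|q-p\|\geq \epsilon$ and the truncation in the definition of $v_\epsilon$ is vacuous. I would therefore split
\begin{equation*}
2 E_\epsilon(v) \;=\; \int_{d(q,P)\geq \epsilon} (v, v)\, \mu \;+\; \int_{d(q,P)<\epsilon} (v, v_\epsilon)\,\mu,
\end{equation*}
and show that the first term carries the entire logarithmic divergence while the second remains $O(1)$.

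For the first term I would work in a tubular neighborhood $T_R(P)$ of fixed small radius $R>0$. Outside $T_R(P)$ the field $v$ is smooth and uniformly bounded (using compactness of $P$ and the decay of the Biot--Savart kernel), contributing only $O(1)$. On the annular region $T_R(P)\setminus T_\epsilon(P)$ I would introduce normal coordinates $(p,x)$ with $p\in P$ and $x\in N_p P$, $\|x\|<R$, and apply Theorem~\ref{dBiot}: expanding $G(q,p)$ along tangent directions to $P$ near the nearest-point $p_0$ of $q$ and integrating over the tangent variable, one obtains the transverse planar-vortex profile $v(q)\sim c_n\, C\, J_{p_0}(x)/\|x\|^2$ to leading order, where $c_n$ is an explicit dimensional constant coming from a Beta-function integral (this is essentially the same local reduction that produced Theorem~\ref{prop:LIA}). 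Hence $(v,v)(q)\sim c_n^2\,C^2/\|x\|^2$ on the annulus, and
\begin{equation*}
\tfrac{1}{2}\int_{T_R\setminus T_\epsilon}(v,v)\mu \;=\; \pi\, c_n^2\, C^2\cdot \mathrm{volume}(P)\int_\epsilon^R \frac{dr}{r} + O(1) \;=\; C_n\cdot \mathrm{volume}(P)\cdot(-\ln\epsilon) + O(1),
\end{equation*}
using the transverse area element $2\pi r\,dr$. Curvature corrections to the tubular coordinates and higher-order Taylor terms in the kernel contribute only to the $O(1)$ remainder.

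For the inner-tube integral I would bound $|v(q)|\lesssim 1/d$ as before, and show by a similar local analysis that $|v_\epsilon(q)|\lesssim 1/\sqrt{\epsilon^2-d^2}$ for $d=d(q,P)<\epsilon$: the truncation excises a normal-projected disk of radius $\sqrt{\epsilon^2-d^2}$ on $P$ about the nearest point, removing precisely the contribution responsible for the near-singularity of $v$ and leaving a residual of that magnitude. Both factors are integrable in the two-dimensional normal disk, yielding
\begin{equation*}
\int_{T_\epsilon(P)}|v|\,|v_\epsilon|\,\mu \;\lesssim\; 2\pi\cdot \mathrm{volume}(P)\int_0^\epsilon\frac{r\,dr}{r\sqrt{\epsilon^2-r^2}} \;=\; \pi^2\,\mathrm{volume}(P) \;=\; O(1).
\end{equation*}

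The main obstacle I anticipate is the sharp local asymptotics: specifically, $v(q)\sim c_n\,C\,J_{p_0}(x)/\|x\|^2$ and $|v_\epsilon(q)|\lesssim 1/\sqrt{\epsilon^2-d^2}$ must be extracted rigorously from the generalized Biot--Savart formula of Theorem~\ref{dBiot}. This entails a careful Taylor expansion of $\nabla_p G(q,p)$ around $p_0$ and a verification that, after projection onto $N_pP$ and application of $J$, the tangential contributions cancel to leading order so that the residual integral produces the planar point-vortex profile in the normal plane. Once these two estimates are in hand, the displayed computations deliver $\lim_{\epsilon\to 0}E_\epsilon/\ln\epsilon = C_n\cdot\mathrm{volume}(P)$, with $C_n$ consistent with the dimensional constants appearing in Theorems~\ref{dBiot} and~\ref{prop:LIA}.
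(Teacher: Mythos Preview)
Your approach is sound in outline and genuinely different from the paper's. You work directly in physical space, splitting the integral over $\R^n$ by distance to $P$ and using the near-field asymptotic $v(q)\sim c_n\, C\, J_{p_0}(x)/\|x\|^2$ in the tubular annulus $T_R\setminus T_\epsilon$ to extract the $\ln\epsilon$ term, then bounding the contribution from $T_\epsilon$. This works, though the particular bound $|v_\epsilon(q)|\lesssim 1/\sqrt{\epsilon^2-d^2}$ is not the sharpest (one actually has $|v_\epsilon|\lesssim 1/\epsilon$ uniformly on $T_\epsilon$, which also gives $O(1)$); and, as you yourself flag, the leading-order planar-vortex profile for $v$ near $P$ must be derived carefully from the Biot--Savart integral before anything else can proceed.

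The paper bypasses these near-field asymptotics entirely. It introduces a $2$-form potential $\beta$ with $d^*\beta=v^\flat$ (equivalently $\Delta\beta=\xi$) and integrates by parts to obtain
\[
E(v)=\tfrac12\int_{\R^n}\xi\wedge *\beta
=\tfrac{C^2}{2}\int_{q\in P}\int_{p\in P} G(q,p)\,\mu_P(p)\,\mu_P(q),
\]
a double integral of the scalar Green function over $P\times P$. The regularized version simply restricts the inner integral to $P_\epsilon=\{p:\|p-q\|\ge\epsilon\}$, after which a single dimension count --- $G\sim\rho^{2-n}$ against the $(n{-}2)$-dimensional volume element $\rho^{n-3}\,d\rho\,d\Theta$ --- gives $\int_\epsilon^a\rho^{-1}\,d\rho\sim-\ln\epsilon$ for each $q$, hence $E_\epsilon\sim C_n\cdot\mathrm{volume}(P)\cdot\ln\epsilon$. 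This is shorter and requires no analysis of $v$ off the membrane; the only input is the local singularity of $G$, which is why the argument carries over verbatim to a general Riemannian $M^n$. Your route, by contrast, leans on the explicit Euclidean Biot--Savart kernel and would need additional work to transplant to curved $M$.
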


We refer to Appendix and \cite{Shash} for details on the proofs for $\R^n$.
As we discuss in Appendix, this regularization is also valid for any Riemannian manifold $M$.

\begin{remark}\label{LIAgeneralM}
{\rm 
When one passes from smooth to singular vorticities supported on membranes of codimension 2 the Euler dynamics 
requires regularization. Correspondingly, so does the associated energy Hamiltonian. On the other hand, the corresponding symplectic structure on smooth vorticities naturally 
descends to the MW symplectic structure on submanifolds (this is how it was defined in \cite{MW})
and does not need a regularization.

This consistency explains why the hydrodynamical Euler equation remains Hamiltonian under the
localized induction approximation. Indeed,  the LIA takes the Hamiltonian Euler equation into the Hamiltonian skew-mean-curvature equation by ``keeping only the logarithmic divergences" given by the  local terms.

For any manifold $M$ the above consistency can be taken   
as the definition of the regularized dynamics, defined 
in Theorem \ref{prop:LIA} and Corollary \ref{cor:LIA}. Namely, one can employ only the 
MW symplectic structure and  regularization of the Hamiltonian, which uses only local properties of the Green function that hold for any $M$, in order to find the vortex dynamics
in the general case.
}
\end{remark}

\bigskip


\section{Singular vorticities in codimension 1: vortex sheets}

\subsection{Vortex sheets as  exact 2-forms}

\medskip

Now we return to an arbitrary manifold $M$ (with $H^1(M)=0$), but 
consider singular vorticities supported in codimension 1. Introduce the following
  
\begin{defn}
Vortex sheets 
{\rm 
are  singular exact 2-forms, i.e.   2-currents of type
$\xi=\alpha\wedge\delta_\Gamma$, where
$\Gamma^{n-1}\subset M^n$ is a closed oriented hypersurface in $M$,
$\delta_\Gamma$ is the corresponding Dirac 1-current supported on $\Gamma$, and $\alpha$ is a closed 1-form on $\Gamma$. 
}
\end{defn}

For a singular 2-form $\xi=\alpha\wedge\delta_\Gamma$ to be exact either 

$i)$ the closed 1-form
$\alpha$ must be exact on $\Gamma$, i.e. $\alpha=df$ for a function $f$ on $\Gamma$, or 

$ii)$ 
the hypersurface $\Gamma$ must be a boundary of some domain $\partial^{-1}\Gamma\subset M$ and the closed 1-form $\alpha$ has to admit an extension to a closed 1-form $\bar\alpha$ on  $\partial^{-1}\Gamma$.  (For instance, $\Gamma$ is a torus in $\R^3$ while $\alpha=d\theta$ with $\theta$ being  one of the generating angles of the torus.)
Note that under the assumption $H^1(M)=0$ a closed hypersurface
$\Gamma$ is always a boundary. 

The above options come from the interpretation
of $\xi=\alpha\wedge\delta_\Gamma$: one can choose either $\alpha$ or $\delta_\Gamma$ to be exact, while the other form closed, for the wedge product 
to be exact.
  
  \smallskip
  
For an exact form $\alpha=df$ the vortex sheet is fibered by levels of the function $f$. If $\alpha$ is a closed 1-form, it is a function differential only locally, and the integral submanifolds of $\mathrm{ker}~\alpha$ foliate $\Gamma$. Thus the vortex sheets are fibered into filaments
  (of codimension 1 in $\Gamma$) in the former case and foliated in the latter.

\begin{example}
{\rm If $\alpha$ is supported on a single hypersurface $\gamma$ in $\Gamma$ (i.e. on a curve $\gamma\subset \Gamma$ for $n=3$),
then the vortex sheet $\xi=\alpha\wedge\delta_\Gamma=\delta_\gamma$
reduces to the vorticity of the filament $\gamma\subset\Gamma$.
}
\end{example}

\begin{remark}
{\rm 
The corresponding primitive 1-forms $\eta$ satisfying
$\xi=d\eta$ for the singular vorticity 2-form $\xi=\alpha\wedge\delta_\Gamma$ are as follows. 

$i)$ For an exact $\alpha=df$ take $\eta=f\delta_\Gamma$. 

$ii)$ For a closed 1-form $\alpha$ extendable to a closed 1-form $\bar\alpha$ on a domain ${\partial^{-1} \Gamma}$  take as  a primitive $\eta=d^{-1}\xi$ the 1-form 
  $\eta=-\chi_{\partial^{-1} \Gamma}\cdot \bar\alpha$, where ${\partial^{-1}
  \Gamma}$ is a domain bounded by the hypersurface $\Gamma$ and
  $\chi_{\partial^{-1} \Gamma}$ is its characteristic function. Indeed,
$$
d\eta=-d(\chi_{\partial^{-1} \Gamma}\cdot \bar\alpha)=-d\chi_{\partial^{-1}
\Gamma}\wedge \bar\alpha=-\delta_{\Gamma}\wedge \bar\alpha=\alpha\wedge \delta_{\Gamma}=\xi\,.
$$

Note that the 1-form $\bar\alpha$ and the domain ${\partial^{-1}\Gamma}$ are not defined uniquely,
and this ambiguity corresponds to the ambiguity
in the definition of a primitive 1-form  $\eta=d^{-1}\xi$.
}
\end{remark}

\bigskip

Vortex sheets $\xi$ understood as singular  currents  can be regarded as
elements of a completion of the dual space  $\SVect(M)^*$. 
(It is convenient to change the order in this wedge product to $\xi=\delta_\Gamma \wedge\alpha$
in order to avoid the signs depending on the dimension of $M$  in the pairing and symplectic structure below.)

\begin{defn-prop}
The pairing of vortex sheets (i.e. singular vorticity currents) $\xi=\delta_\Gamma \wedge\alpha$ with vector fields $V\in \SVect(M)$ is defined by 
(cf. the pairing (\ref{pairing}))
$$
\langle d^{-1}(\delta_\Gamma \wedge\alpha), V\rangle
=\int_M i_V  d^{-1}(\delta_\Gamma \wedge\alpha) \cdot \mu\,,
$$
where $d^{-1}(\delta_\Gamma \wedge\alpha)$ is a primitive 1-form for the vorticity $\xi=\delta_\Gamma \wedge\alpha$. The pairing is well defined, i.e. it does not depend on the choice
of $d^{-1}$.
\end{defn-prop}

\begin{proof}  Indeed,
$$
\langle d^{-1}(\delta_\Gamma \wedge\alpha), V\rangle
=\int_M   d^{-1}(\delta_\Gamma \wedge\alpha) \wedge i_V\mu
=\int_M   \delta_\Gamma \wedge\alpha \wedge d^{-1}(i_V\mu)
=\int_\Gamma \alpha \wedge d^{-1}(i_V\mu)\,.
$$
Since $H^1(M)=0$ the closed $(n-1)$-form $i_V\mu$ is exact, and its primitives  
$d^{-1}(i_V\mu)$ may differ by an exact $(n-2)$-form $\zeta$. Then the form $\alpha\wedge \zeta$
is exact on $\Gamma$ and  the corresponding pairing difference given by the integral over $\Gamma$  is zero. 
\end{proof}

For instance, for an exact $\alpha=df$ the pairing reduces to 
$\langle d^{-1}(\delta_\Gamma \wedge df), V\rangle=\mathrm{Flux}\,(fV)|_\Gamma$.

\medskip

\begin{remark}
{\rm
Suppose that $\Gamma$ is the oriented boundary between two different parts
  $M_j$ with velocity fields $v_1, v_2$
that are divergence-free and vorticity-free (i.e. locally potential flows).
The vorticity is infinite at the interface $\Gamma$ and here we describe 
how to define the 1-form $\alpha$ in the corresponding vortex sheet $\xi=\alpha\wedge\delta_\Gamma$.
  
Given a Riemannian metric on $M$, we prepare the 1-form $v^\flat_j$ on
$M_j$ corresponding
to the velocity $v_j$, respectively. Note that the forms $v^\flat_j$ must be locally exact,
 $v^\flat_j=dh_j$  since
${\rm curl}\,v_j=0$ on $M_j$. Then locally $\alpha:=(dh_1-dh_2)|_{\Gamma}=df_1-df_2$.
One can also define this 1-form $\alpha=d(f_1-f_2)$ by means of the
vector field $v_\Gamma$
inside this vortex sheet $\Gamma$ by using  the metric restricted to
$\Gamma$: locally $v_\Gamma:=(d(f_1-f_2))^\sharp={\rm Proj}|_\Gamma
(v_1-v_2)$.
The proper sign of  $v_\Gamma$ or the form $\alpha$ 
depends on the orientation of $\Gamma$: the latter defines
the orientation of the corresponding exterior normal and hence
signs of the fields $v_1$ and $v_2$ in this difference.
}
\end{remark}

\bigskip


\subsection{Definition and properties of the symplectic structure on vortex sheets}


There is a natural symplectic structure on vortex sheets coming
from  the Lie-Poisson structure on $\SVect(M)^*$. It extends
the Marsden-Weinstein symplectic structure for filaments in $\R^3$ and for membranes of codimension 2 in $M^n$. The corresponding symplectic leaves are defined by isovorticed fields, i.e. fields with diffeomorphic singular vorticities $\alpha\wedge \delta_\Gamma $. 
The corresponding symplectic structure on spaces of diffeomorphic vortex sheets is defined as follows.

\medskip

\begin{defn}
{\rm 
Given two vector fields $V, W$ attached at $\Gamma$  define the symplectic structure
on variations of vortex sheets $\xi=\delta_\Gamma \wedge\alpha$, i.e. pairs $(\Gamma, \alpha)$, by 
$$
\omega_{\delta_\Gamma\wedge\alpha}(V,W):=\int_{\Gamma}\alpha \wedge  i_Vi_W\mu\,.
$$
}
\end{defn}

\begin{thm}
The form $\omega_{\delta_\Gamma\wedge\alpha}$ coincides with the Kirillov-Kostant symplectic structure $\omega_\xi^{KK}$ on the coadjoint orbit containing the vortex sheet $\xi$ in $\SVect^*(M)$.
\end{thm}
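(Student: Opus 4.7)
The plan is to specialize the general Kirillov--Kostant calculation in (\ref{KK}) to the singular current $\xi = \delta_\Gamma\wedge\alpha$, so that the ``$\int_M\xi\wedge i_Vi_W\mu$'' version of the KK formula collapses onto $\Gamma$ and reproduces the formula defining $\omega_{\delta_\Gamma\wedge\alpha}$. Since the coadjoint action of $\SDiffM$ preserves the type ``(hypersurface, closed 1-form on it)'' of the vorticity, a tangent vector to the orbit at $\xi$ can be realized by a divergence-free field $V$ attached along $\Gamma$ (infinitesimal deformation $(\Gamma,\alpha)\mapsto$ its Lie transport along $V$), which is exactly the input to the displayed formula.

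First I would recall the derivation that underlies (\ref{KK}): for divergence-free $V,W$ one has the identity $i_{[V,W]}\mu = d(i_Vi_W\mu)$, so
$$
\omega^{KK}_\xi(V,W) \;=\; \langle d^{-1}\xi,[V,W]\rangle \;=\;\int_M d^{-1}\xi\,\wedge\, i_{[V,W]}\mu \;=\;\int_M d^{-1}\xi\,\wedge\, d(i_Vi_W\mu).
$$
An integration by parts (Stokes, applied to the compactly supported test form $i_Vi_W\mu$ against the current $d^{-1}\xi$) converts this into $\int_M \xi \wedge i_Vi_W\mu$. This step is formal in the smooth case, but in the present singular setting it is precisely what the definition of the exterior derivative of a current encodes, so no additional analytic input is needed beyond the fact that $i_Vi_W\mu$ is a smooth $(n-2)$-form.

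Next I would plug in $\xi=\delta_\Gamma\wedge\alpha$ and use the defining property of the Dirac current $\delta_\Gamma$ (evaluation pulls the ambient integral back to $\Gamma$):
$$
\int_M (\delta_\Gamma\wedge\alpha)\wedge i_Vi_W\mu \;=\; \int_\Gamma \alpha\wedge (i_Vi_W\mu)\big|_\Gamma.
$$
This is exactly $\omega_{\delta_\Gamma\wedge\alpha}(V,W)$ as defined just above the theorem. I would also verify that the expression depends only on the restrictions of $V,W$ to $\Gamma$ (more precisely, on their normal components along $\Gamma$, which represent the variation of the vortex sheet as a coadjoint orbit tangent vector), a fact that follows because $i_Vi_W\mu$ restricted to $\Gamma$ only involves the components of $V,W$ transverse to $\Gamma$, so ambiguities in extending variations off $\Gamma$ do not affect the answer.

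The main obstacle is making the integration-by-parts step rigorous in the distributional setting: $d^{-1}\xi$ is defined only up to closed 1-forms and is itself singular (a bounded but discontinuous $\chi_{\partial^{-1}\Gamma}\cdot\bar\alpha$, as recalled in the preceding remark), while $i_Vi_W\mu$ is smooth. I would handle this either by viewing the computation purely in the language of currents (where $d$ is defined by duality and the identity $\int d\beta\wedge\phi = \pm\int\beta\wedge d\phi$ is tautological for smooth test forms $\phi$), or, equivalently, by approximating $\xi$ weakly by smooth vorticities and passing to the limit. The independence of the choice of representative $d^{-1}\xi$ is already addressed in the Definition--Proposition preceding the theorem, and this same argument shows that the resulting KK value is unambiguous.
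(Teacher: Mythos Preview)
Your proposal is correct and follows essentially the same route as the paper: use $i_{[V,W]}\mu = d(i_Vi_W\mu)$, integrate by parts to pass from $d^{-1}\xi$ to $\xi$, then let $\delta_\Gamma$ collapse the $M$-integral onto $\Gamma$. Your added discussion of the distributional integration-by-parts and the dependence only on the transverse components of $V,W$ along $\Gamma$ goes beyond the paper's deliberately formal treatment, but the underlying argument is identical.
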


\begin{proof}  Adapt the formula (\ref{KK}) for Kirillov-Kostant symplectic structure on the coadjoint orbit of $\xi$ to the case of a vortex sheet $\xi=\delta_\Gamma \wedge\alpha$. 
Let $V$ and $W$ be two variations of  $\xi$ given by divergence-free vector fields on $M$.
Then by using the identity $i_{[V,W]} \mu=d(i_Vi_W\mu)$ valid 
for divergence-free  fields  and specifying to the case of
  $\xi=\delta_\Gamma \wedge\alpha$ one obtains  
$$
\omega^{KK}_\xi(V,W): =\int_{M} d^{-1}\xi \wedge i_{[V,W]} \mu
=\int_{M}  d^{-1}\xi  \wedge d(i_Vi_W\mu)
=\int_{M} \xi \wedge i_Vi_W\mu
$$
$$
= \int_{M} \delta_\Gamma\wedge \alpha \wedge i_Vi_W\mu
=\int_{\Gamma} \alpha\wedge i_Vi_W\mu
= \omega_{\delta_\Gamma\wedge\alpha}(V,W)\,.
$$
\end{proof}

\begin{remark}
{\rm
 If $\alpha$ is supported on a curve $\gamma\subset \Gamma$, i.e.
$\xi=\delta_\gamma$, then
$\omega_\xi(V,W):=\int_{\gamma}i_V i_W\mu$. For a curve $\gamma\subset
\R^3$ this is exactly
the Marsden-Weinsten symplectic structure $\omega^{MW}_\gamma$ on filaments, i.e. non-parametrized curves in $\R^3$, see (\ref{MWss}).
}
\end{remark}

\medskip

The evolution of  vortex sheets $\xi=\alpha\wedge\delta_\Gamma$ is
defined by the classical
Euler equation in the vorticity form $\partial_t\xi+L_v\xi=0$, where
$\xi={\rm curl}\,v=dv^\flat$. This equation
is Hamiltonian with respect to the above symplectic structure $\omega^{KK}_\xi$. The standard
energy Hamiltonian $E(v)=\frac 12 \int_M ( v,v)\,\mu$ 
defines a non-local evolution of the vortex sheet, similarly to the case of membranes.

Let $(f,\theta)$ be coordinates on a vortex sheet $\alpha\wedge\delta_\Gamma$ in $\R^3$ where the exact 1-form $\alpha=df$ and the surface $\Gamma$ is fibered into  the filaments $\Gamma_f$ being  levels of the function $f$. The rough LIA procedure similar to the one described in Section \ref{filaments} under the cut-off assumption 
$\epsilon<|\theta-\tilde \theta|\le |f-\tilde f|^2$ leads to the binormal type equation:
$
\partial_t \Gamma=\Gamma_\theta\times \Gamma_{\theta\theta}\,,
$
which is Hamiltonian with the Hamiltonian function $H(\Gamma):=\int \text{length}(\Gamma_f)\, df$. The latter may be understood as a continuous family of binormal equations.
One may hope that other assumptions on the cut-off procedure lead to more interesting approximations.
\medskip

\begin{problem}
{\rm Describe possible analogues of  localized induction
approximations (LIAs) and the  length Hamiltonian for vortex sheets. 
}
\end{problem}

The Euler evolution of vortex sheets is described in the closed form by the Birkhoff-Rott equation, see e.g. \cite{instability}. The motion of vortex sheets is known to be  subject to instabilities of Kelvin-Helmholtz type which lead to roll-up phenomena.
It would be interesting to obtain this instability within the Hamiltonian framework for vortex sheets described above, cf. \cite{Loeschcke}.

\bigskip


\section{Appendix: Derivation of the LIA in higher dimensions}

In this Appendix we outline,  following \cite{Shash} and extending it to any dimension, the generalized 
Biot-Savart formula and regularized energy for the vector fields whose vorticity is confined to membranes, i.e. submanifolds of codimension 2.

\subsection{Generalized and localized Biot-Savart formulas} 

Let $v$ be a vector field in the Euclidean space $\R^n, \,n\ge 3$.
Assume this field to be  divergence free: ${\rm div}\,v=0$ or, equivalently, $d^* v^\flat=0$ for the 1-form $v^\flat$ on $\R^n$. 
Its vorticity is the 2-form $\xi=dv^\flat$. We are looking for a generalized Biot-Savart formula 
which would allow one to reconstruct the velocity field $v$ for a given vorticity 2-form $\xi$, and in particular, for a given singular vorticity $\xi_P=\delta_P$ supported on a compact membrane $P$.

Consider $d^*\xi=d^* d v^\flat=\Delta v^\flat=(\Delta v)^\flat$. 
Then component-wise one has the Poisson equation on $v$:
$\Delta v_i=  *(dx_i\wedge * d^*\xi)$.

Let $G(\cdot ,p)$ be the Green function for the Laplace operator in $\R^n\ni p$.
Then at any $q\in \R^n$ the components of the field-potential are
$$
v_i(q)= \int_{\R^n} G(q,p)\wedge *(dx_i\wedge * d^*\xi)(p)\,\mu(p)
=- \int_{\R^n} (\partial_i, (*(d_p G(q,p)\wedge *\xi))^\sharp)\,\mu(p)\,,
$$
where $\partial_i$ are the coordinate unit vectors in $\R^n$.  The vector  field-potential $v$ itself is
\begin{equation}\label{genBiot}
v(q)=- \int_{\R^n} (*(d_p G(q,p)\wedge *\xi))^\sharp \,\mu(p)\,,
\end{equation}
which is the {\it generalized Biot-Savart formula} in the case of smooth vorticity $\xi$.

\medskip

\begin{thm}\label{dBiot-app} {\rm ({\bf = \ref{dBiot}$'$})} 
For any dimension $n$ the  divergence-free vector field $v$ satisfying ${\rm curl} \,v= \xi_P$ (i.e.  $dv^\flat=\xi_P$) is given by the following localized Biot-Savart formula: for any point $q\not\in P$ one has
\begin{equation}\label{localBiot}
v(q):= \int_P {\rm sgrad}_p \left(G(q,p)|_{N_pP}\right)\,\mu_P(p)\,,
\end{equation}
where  $G(q,p)|_{N_pP}$ is the restriction of the function 
$G(q,p)$ to the normal plane $N_pP\subset \R^n$.
\end{thm}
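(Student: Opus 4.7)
My plan is to derive the claimed formula by specializing the generalized Biot-Savart formula (\ref{genBiot}) to the singular vorticity $\xi_P=\delta_P$, working out the Hodge-theoretic expression $*(d_p G\wedge *\xi)^\sharp$ in local normal coordinates adapted to $P$. The starting point is already in place: for smooth $\xi$ one has
$$
v(q)=-\int_{\R^n} \bigl(*(d_p G(q,p)\wedge *\xi)\bigr)^\sharp\,\mu(p),
$$
so the task reduces to analyzing the distributional integrand when $\xi$ is the $\delta$-current supported on $P$. I would interpret $\xi_P$ as the current of integration against $(n-2)$-forms, so $*\xi_P$ is, up to a sign determined by orientations, the integration current along $P$, i.e.\ a distributional $(n-2)$-form whose pairing with a test $(n-2)$-form is its integral over $P$.

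Next, I would fix $p_0\in P$ and choose an orthonormal frame with coordinates $(x_1,x_2,y_1,\dots,y_{n-2})$ such that $(x_1,x_2)$ span $N_{p_0}P$ and $(y_j)$ span $T_{p_0}P$. Locally $P=\{x_1=x_2=0\}$, and one computes
$$
\xi_P=\delta(x_1)\delta(x_2)\,dx_1\wedge dx_2,\qquad *\xi_P=\pm\delta(x_1)\delta(x_2)\,dy_1\wedge\cdots\wedge dy_{n-2}.
$$
Splitting $d_p G=(d_pG)^N+(d_pG)^T$ into its $N_pP$ and $T_pP$ components, the tangential part $(d_pG)^T$ is annihilated by the wedge with $dy_1\wedge\cdots\wedge dy_{n-2}$, so only the normal component survives:
$$
d_pG\wedge *\xi_P=\pm\delta(x_1)\delta(x_2)\bigl(\partial_{x_1}G\,dx_1+\partial_{x_2}G\,dx_2\bigr)\wedge dy_1\wedge\cdots\wedge dy_{n-2}.
$$
Applying $*$ exchanges each $dx_i$ in the normal plane with its orthogonal complement in $N_pP$, i.e.\ produces precisely the rotation by $\pi/2$ in $N_pP$. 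After raising indices with $\sharp$, this is exactly $J\bigl(\mathrm{Proj}_N\nabla_p G(q,p)\bigr)$, which is the same as $\mathrm{sgrad}_p\bigl(G(q,p)|_{N_pP}\bigr)$ when we equip $N_pP$ with its induced metric and symplectic form. Integrating against the delta factor collapses the $\R^n$-integral to an $(n-2)$-integral over $P$ with the induced volume $\mu_P$, yielding (\ref{localBiot}) up to the overall sign and dimensional constant $C_n$.

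To finish, I would verify the two required properties of $v$. The divergence-freeness $d^*v^\flat=0$ follows from the construction, since it comes from the choice $v^\flat=d^{-1}\xi$ modulo harmonics and the use of the Green function $G$ for $\Delta$. For $dv^\flat=\xi_P$, I would re-insert the identity $\Delta_pG(q,p)=\delta_q(p)$ into the derivation $\Delta v=*\,(dx_i\wedge *d^*\xi)$, which recovers $\mathrm{curl}\,v=\xi_P$ in the sense of currents; the standard argument here is that the generalized Biot-Savart formula (\ref{genBiot}) is valid for distributional $\xi$, and the local computation above is simply an explicit evaluation of that formula for $\xi=\xi_P$.

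The main obstacle I expect is the careful bookkeeping of signs and orientations when commuting the Hodge star past the distributional wedge, and verifying that the identification of $*(d_pG\wedge *\xi_P)^\sharp$ with $J(\mathrm{Proj}_N\nabla_p G)$ respects the chosen orientation conventions on $N_pP$ (which in turn fix the sign of $J$ and thus the sign of $C_n$). The rest — passing from the smooth Biot-Savart identity to its distributional specialization, and collapsing the ambient integral to $P$ via the $\delta$-factor — is routine once the local model $\xi_P=\delta(x_1)\delta(x_2)\,dx_1\wedge dx_2$ is set up.
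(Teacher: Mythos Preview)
Your proposal is correct and follows essentially the same route as the paper: both start from the generalized Biot-Savart formula (\ref{genBiot}), introduce local coordinates splitting into normal $(\nu_1,\nu_2)$ and tangential directions along $P$, write $\xi_P$ as a $\delta$-type 2-form in the normal variables so that $*\xi_P=\mu_P$, observe that only the normal part of $d_pG$ survives the wedge, and identify the resulting expression with $J(\mathrm{Proj}_N\nabla_p G)=\mathrm{sgrad}_p(G|_{N_pP})$ before collapsing the ambient integral to $P$. Your explicit decomposition $d_pG=(d_pG)^N+(d_pG)^T$ and your remarks on verifying $d^*v^\flat=0$ and $dv^\flat=\xi_P$ are slightly more detailed than the paper's treatment, but the argument is the same.
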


\begin{proof}
In order to set $\xi$ to be $\xi_P= \delta_P$ we think of the latter in terms of local coordinates. Let $t_1,...,t_{n-2}$ be local coordinates along  $P$, while $\nu_1, \nu_2$ are coordinates normal to $P$  near $p\in P$. Then locally 
$\xi_P(q)=C\delta_p(q) \,d\nu_1\wedge d\nu_2$ where $\delta_p(q)$ is a delta-function supported at  $p\in P$, 
i.e. $\xi_P$ is the $\delta$-type 2-form in the transversal to $P$ direction, and one has
$*\xi_P=\mu_P$.

\smallskip

Then for $q\in \R^n$ and $p\in P$ one has
$$
v(q)=- \int_{\R^n} \delta_p(q) (*(d_p G(q,p)\wedge \mu_P))^\sharp \,\mu(p)
$$
$$
=-\int_{\R^n} \delta_p(q) \left(\frac{\partial_p G(q,p)}{\partial \nu_1}\,d\nu_2-\frac{\partial_p G(q,p)}{\partial \nu_2}\,d\nu_1\right)^\sharp  \,\mu(p)
$$
$$
=-\int_{P}  \left(\frac{\partial_p G(q,p)}{\partial \nu_1}\,d\nu_2-\frac{\partial_pG(q,p)}{\partial \nu_2}\,d\nu_1\right)^\sharp  \,\mu_P(p)=\int_{P}  {\rm sgrad}_p (G(q,p)|_{N_pP})\,\mu_P(p)\,,
$$
where the last equality is due to the following (all derivatives of $G(q,p)$ are in $p$, so we skip the index):
$$
\left(-\frac{\partial G}{\partial \nu_1}\,d\nu_2 + \frac{\partial G}{\partial \nu_2}\,d\nu_1\right)^\sharp 
=- \frac{\partial G}{\partial \nu_1}\partial_{\nu_2} +\frac{\partial G}{\partial \nu_2}\partial_{\nu_1}
=:{\rm sgrad}_p (G|_{N_pP})=J\left({\rm Proj}_N\nabla_p G\right)
$$
\end{proof}

\begin{remark}
{\rm 
For $q\not\in P$ the integrand expression above is smooth, since so is $G(q,p)$ as a function of $p\in P$. For $q\in P$ the integral (\ref{localBiot}) is well defined provided that the integration over $P$ is replaced by that over $P_\epsilon=\{p\in P~|~\|p-q\|\ge \epsilon\}.$ As $p\to q\in P$ the Green function has a singularity $G(q,p)= C_n \|{\bf r}\|^{2-n}$ where
${\bf r}:=p-q\in \R^n$. Hence
$\nabla_p G= C_n \,{\bf r}/\|{\bf r}\|^{n}$, and therefore the integral is divergent. (Recall that $C_n$ stands for any constant depending on $n$.) This divergence is ``local"
in the sense that the contributions from $p\in P$ close to $q\in P$ make the velocity $v(q)$ divergent, and this local contribution into $v(q)$ is exactly what the LIA takes into account.
}
\end{remark}

\bigskip

\subsection{Regularization of velocity}

Given $\epsilon>0$ consider a geodesic ball $U_\epsilon$ in the membrane $P$ of radius $\epsilon$ around a point $q\in P$. Define now a truncation 
 $v_\epsilon(q)$  by integrating in  (\ref{localBiot})  over $P_\epsilon:=P\setminus U_\epsilon$
 instead of over $P$:
$$
v_\epsilon(q):=\int_{P_\epsilon} {\rm sgrad}_p \left(G(q,p)|_{N_pP}\right)\,\mu_p\,.
$$

\begin{thm}{\rm ({\bf = \ref{prop:LIA}$'$})} 
For any dimension $n$ the approximation $v_\epsilon(q)$ 
has the following asymptotics: at any point $q\in P$ one has
$$
\lim_{\epsilon\to 0} \frac{v_\epsilon(q)}{\ln\epsilon}=C_n\cdot J\left({\bf MC}(q)\right)\,,
$$
where ${\bf MC}(q)$ is the mean curvature vector of the membrane $P$ at $q$ and 
the constant $C_n$ depends on $n$ only.
\end{thm}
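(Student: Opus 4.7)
The plan is to localize the integral defining $v_\epsilon(q)$ to a small geodesic ball in $P$ around $q$, Taylor-expand the integrand, and isolate the logarithmically divergent term. Writing $P_\epsilon = ((P\cap B_R(q))\setminus U_\epsilon) \cup (P\setminus B_R(q))$ for a small fixed $R>0$, the outer integral is $O(1)$ as $\epsilon\to 0$ because the integrand $\mathrm{sgrad}_p(G(q,p)|_{N_pP})$ is smooth on $P\setminus B_R(q)$. Thus only the near piece can contribute to the ratio $v_\epsilon(q)/\ln\epsilon$.

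On the near piece I would introduce geodesic normal coordinates $t=(t_1,\dots,t_{n-2})\in T_qP$ on $P$ centered at $q$, so that the embedding has the Taylor expansion
$$p(t) = q + t + \tfrac12\,\mathrm{II}_q(t,t) + O(|t|^3),$$
where $\mathrm{II}_q\colon T_qP\otimes T_qP\to N_qP$ is the second fundamental form at $q$. Since $\|p(t)-q\|^2=|t|^2+O(|t|^4)$ and $G(q,p)=C_n\|p-q\|^{2-n}$, a direct computation yields
$$\mathrm{Proj}_{N_{p(t)}P}\nabla_p G(q,p(t)) = -\tfrac{n-2}{2}\,C_n\,|t|^{-n}\,\mathrm{II}_q(t,t) + O(|t|^{2-n}),$$
because only the normal component of $p(t)-q$, equal to $\tfrac12\mathrm{II}_q(t,t)+O(|t|^3)$, survives the projection. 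The induced volume form on $P$ is $\mu_P=(1+O(|t|^2))\,dt$ and the frame on $N_{p(t)}P$ differs from that on $N_qP$ by $O(|t|)$, neither of which affects the leading order.

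Passing to polar coordinates $t=\rho\theta$ with $\theta\in S^{n-3}$ gives $dt=\rho^{n-3}\,d\rho\,d\theta$, converting the leading term into $-\tfrac{n-2}{2}\,C_n\,\rho^{-1}\,\mathrm{II}_q(\theta,\theta)\,d\rho\,d\theta$, while each Taylor-error term carries an extra factor $\rho^k$ with $k\ge 1$ and hence is integrable at $\rho=0$. Applying the standard identity $\int_{S^{n-3}}B(\theta,\theta)\,d\theta=\mathrm{vol}(S^{n-3})\,\mathrm{tr}(B)/(n-2)$ valid for any symmetric bilinear form $B$ on $\R^{n-2}$, and using $\mathbf{MC}(q)=\mathrm{tr}(\mathrm{II}_q)/(n-2)$ from Definition \ref{defMC}, the angular integral collapses to $\mathrm{vol}(S^{n-3})\,\mathbf{MC}(q)$. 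Combined with $\int_\epsilon^R d\rho/\rho=-\ln\epsilon+O(1)$, this gives
$$\int_{P_\epsilon}\mathrm{Proj}_N\nabla_p G\,\mu_P = c_n\,\ln\epsilon\cdot \mathbf{MC}(q) + O(1)$$
for an explicit dimension-dependent constant $c_n$. The operator $J$ is a fixed linear rotation on $N_qP$, so applying it and dividing by $\ln\epsilon$ produces the stated limit $v_\epsilon(q)/\ln\epsilon\to C_n\,J(\mathbf{MC}(q))$.

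The principal technical point will be the bookkeeping of error terms. After the Jacobian $\rho^{n-3}$ and the $\rho^{1-n}$ size of $\nabla_p G$ are accounted for, an extra factor $|t|^k$ in the integrand scales as $\rho^{k-2}\,d\rho\,d\theta$; only the exceptional value $k=2$ yields a logarithm, while $k\ge 3$ terms give $O(1)$ contributions and odd powers of $\theta$ vanish by the $\theta\mapsto -\theta$ symmetry of $S^{n-3}$. This is precisely why the quadratic second-fundamental-form term $\tfrac12\mathrm{II}_q(t,t)$ is the unique source of logarithmic divergence, and why the limit is proportional to $\mathbf{MC}(q)$ rather than to some higher-order curvature invariant.
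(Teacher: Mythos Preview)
Your approach is the same as the paper's: localize to a geodesic ball, expand in polar coordinates, and isolate the $\int d\rho/\rho$ divergence. The only cosmetic difference is that the paper packages the expansion via curvature vectors of geodesics and invokes Definition~\ref{defMC}(b), whereas you use the second fundamental form and Definition~\ref{defMC}(a) together with the trace identity on $S^{n-3}$; these are equivalent formulations of the same computation.

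There is, however, one genuine slip in your projection step. You assert that ``only the normal component of $p(t)-q$, equal to $\tfrac12\mathrm{II}_q(t,t)+O(|t|^3)$, survives the projection,'' and that the $O(|t|)$ rotation of the normal frame ``does not affect the leading order.'' This is incorrect: the tangential part $t\in T_qP$ of $p(t)-q$ has size $|t|$, and its projection onto the \emph{rotated} plane $N_{p(t)}P$ (not $N_qP$) is of order $|t|\cdot O(|t|)=O(|t|^2)$, i.e.\ the same order as $\mathrm{II}_q(t,t)$. Carrying this through (equivalently, using the paper's identity $({\bf t},\partial_\rho\partial_{\nu_i})=-(\partial_\rho{\bf t},\partial_{\nu_i})$, which comes from differentiating $({\bf t},\partial_{\nu_i})\equiv 0$) one finds
\[
\mathrm{Proj}_{N_{p(t)}P}\bigl(p(t)-q\bigr)\;=\;-\tfrac12\,\mathrm{II}_q(t,t)+O(|t|^3),
\]
with the opposite sign from what you wrote. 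Since the theorem only claims \emph{some} dimension-dependent constant $C_n$, your conclusion survives, but the argument as written drops a contribution of leading order and should be corrected.
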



\begin{proof}
In order to find the asymptotics of how $v_\epsilon(q)\to\infty$ as $\epsilon\to 0$ we localize
$v_\epsilon$, i.e. confine the integration to a punctured neighborhood $U_{\epsilon,a}=\{p\in P_\epsilon~|~\epsilon<\|p-q\|<a\}\subset P_\epsilon$, since the integral outside of it, over $P_a=P_\epsilon\setminus U_{\epsilon,a}$ is finite.

Set the origin of $\R^n$ at $q$, denote the radius vector from $q$ to $p$ by ${\bf r}=p-q\in \R^n$.
Introduce the geodesic radial coordinate $\rho$ and spherical multi-coordinate $\Theta$ in the ball $U_a$ of radius $a$ inside the membrane $P$. Note that for the volume form on $P$ of dimension $n-2$ one has $\mu_P=\rho^{n-3}\,d\rho\,d\Theta$.

\smallskip

Then for a point  $p\in  U_{\epsilon,a}$ one has 
$G(q,p)= C_n / \|{\bf r}\|^{n-2} \sim C_n / \rho^{n-2}$ for the Green function, where  $\sim$ stands for the leading term in the corresponding expansion. 
Hence, 
$\nabla_p G(q,p)\sim  C_n \,{\bf r}/\rho^{n}$. 
Denote by $\nu_1, \nu_2$ normal coordinates to the codimension 2 membrane $P$ near $q$. We have  
$$
v_\epsilon(q)
  \sim \int\limits_{U_{\epsilon,a}} J\left( {\rm Proj}_N\left(\nabla_p G(q,p)\right)\right)\,\mu_P(p)
= C_n \int\limits_{S^{n-3}}\int_\epsilon^a J\frac{({\bf r}, \partial_{\nu_1})\partial_{\nu_1}+({\bf r},\partial_{\nu_2})\partial_{\nu_2}}{\rho^n}
                \rho^{n-3}\,d\rho\,d\Theta\,.
$$
Now we fix $\Theta$ (temporarily suppressing this notation) and denote by ${\bf t}(\rho)={\partial {\bf r}}/{\partial \rho}$ the
tangent vector to the geodesic in direction $\Theta$: expand the following quantities in $\rho$ near $\rho=0$ in the punctured neighborhood  $U_{\epsilon,a}$
as follows:
$$
\partial_{\nu_i}(\rho)=\partial_{\nu_i}(0)+\rho\frac{\partial_{\nu_i}}{\partial \rho}(0)+{\mathcal O}(\rho^2)\,;
$$
$$
{\bf r}(\rho)=\rho\frac{\partial {\bf r}}{\partial \rho}(0) +\frac{\rho^2}{2}\frac{\partial^2 {\bf r}}{\partial \rho^2} (0)+{\mathcal O}(\rho^3)
=\rho\,{\bf t}(0) +\frac{\rho^2}{2}\frac{\partial {\bf t}}{\partial \rho}(0) +{\mathcal O}(\rho^3)\,.
$$
Then for a given $\Theta$ using 
$({\bf t}, \frac{\partial_{\nu_i}}{\partial \rho})=-(\frac{\partial {\bf t}}{\partial \rho}, \partial_{\nu_i})$, 
which is implied by $({\bf t}, \partial_{\nu_i})(\rho)=0$ for any $\rho$, one
 obtains the following expansion of  $({\bf r}, \partial_{\nu_1})\partial_{\nu_1}+({\bf r},\partial_{\nu_2})\partial_{\nu_2}$ in $\rho$:
$$
\rho^2\left[\left(\left({\bf t},\frac{\partial_{\nu_1}}{\partial \rho}\right) \partial_{\nu_1} 
                +\frac 12 \left(\frac{\partial {\bf t}}{\partial \rho}, \partial_{\nu_1}\right)\partial_{\nu_1} \right)
                + 
\left(\left({\bf t},\frac{\partial_{\nu_2}}{\partial \rho}\right) \partial_{\nu_2} 
                +\frac 12 \left(\frac{\partial {\bf t}}{\partial \rho}, \partial_{\nu_2}\right)\partial_{\nu_2} \right)
\right](0)
+{\mathcal O}(\rho^3)
$$
$$
=-\frac{\rho^2}{2}\left[ \left(\frac{\partial {\bf t}}{\partial \rho}, \partial_{\nu_1}\right)\partial_{\nu_1} 
+
\left(\frac{\partial {\bf t}}{\partial \rho}, \partial_{\nu_2}\right)\partial_{\nu_2} \right](0)
+{\mathcal O}(\rho^3)
=-\frac{\rho^2}{2} {\bf curv}_{\bf t}(0)+{\mathcal O}(\rho^3)\,.
$$
Here ${\bf curv}_{\bf t}(0)$  is  the vector of the geodesic curvature 
for the direction ${\bf t}$ at $\rho=0$ and fixed $\Theta$, i.e.  at the point $0\in \R^n$, which stands for $q$.
Restoring the dependence on $\Theta$ we have
$$
v_\epsilon(q)
\sim  C_n \int_{S^{n-3}}\int_\epsilon^a 
J\,\frac{\rho^2\,{\bf curv}_{\bf t}(0, \Theta)}{\rho^{n}}{\rho^{n-3}}\,d\rho\,d\Theta
$$
$$
=  C_n \int_\epsilon^a \frac{d\rho}{\rho} \cdot J \int_{S^{n-3}} {\bf curv}_{\bf t}(0, \Theta)\,d\Theta
\sim C_n \cdot \ln\epsilon\cdot J({\bf MC}(q) )
$$
by Definition \ref{defMC}b) of the mean curvature vector.
\end{proof}

\bigskip

\subsection{Regularization of energy}

Obtain now a regularized expression for the corresponding energy of the velocity field $v$.
Recall that the kinetic energy of a fluid moving with velocity $v$ in a manifold $M$ with a Riemannian volume form $\mu$  is $E(v)=\frac 12\int_M(v,v)\mu= \frac 12 \int_M v^\flat\wedge *v^\flat $.

Let  $\xi_P$ be the vorticity 2-form supported on a membrane $P\subset \R^n$ of codimension 2.
As we will see below, the corresponding energy $E(v)=\frac 12\int_M (v,v)\,\mu$ for the velocity $v$ satisfying ${\rm curl}\,v=\xi_P$
is divergent. Following \cite{Shash} define  the regularized energy 
$$
E_\epsilon(v)=\frac 12 \int_{\R^n} (v ,v_\epsilon)\,\mu\,.
$$

\begin{thm}{\rm ({\bf = \ref{thm:energy}$'$})} 
For  any dimension $n$ the regularized energy $E_\epsilon(v)$ has the following asymptotics:
$$
\lim_{\epsilon\to 0}  \frac{E_\epsilon(v)}{\ln\epsilon}=C_n \cdot \int_P\mu_P
=C_n\cdot {\rm volume}\,(P)\,.
$$
\end{thm}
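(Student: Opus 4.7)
The plan is to localize $E_\epsilon(v)=\tfrac12\int_{\R^n}(v,v_\epsilon)\,\mu$ to a tubular neighborhood of $P$ and identify the logarithmic divergence as arising from the transverse direction, where $v$ behaves like a two-dimensional point vortex in each normal plane $N_{\tilde p}P$: the universal profile $|v(q)|\sim 1/r$ (with $r$ the distance from $P$) yields $|v|^2\sim 1/r^2$, whose integral against the transverse area element $r\,dr\,d\theta$ produces $\ln(1/\epsilon)$, the overall coefficient being $\mathrm{volume}(P)$ after integrating over $P$.

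More explicitly, fix $\delta>0$ independent of $\epsilon$ and parametrize the tube $\mathcal T_\delta=\{q=\tilde p+\vec\nu:\tilde p\in P,\ \vec\nu\in N_{\tilde p}P,\ r=|\vec\nu|\leq\delta\}$. The Euclidean volume form decomposes as $\mu=\mu_P(\tilde p)\wedge dA(\vec\nu)\cdot(1+O(r))$, where $dA$ is the area form on $N_{\tilde p}P$. Outside $\mathcal T_\delta$ both $v$ and $v_\epsilon$ are uniformly bounded, so their contribution to $E_\epsilon(v)$ is $O(1)$. Applying the generalized Biot--Savart formula of Theorem~\ref{dBiot-app} and performing, for $q\in\mathcal T_\delta\setminus P$, the tangential integral $\int_{\R^{n-2}}(|s|^2+r^2)^{-n/2}\,ds=\mathrm{const}_n\cdot r^{-2}$ (cf.\ the Taylor expansion in the proof of Theorem~\ref{prop:LIA}), one obtains the universal transverse asymptotic
$$v(q)=\frac{C_n'}{r}\,J(\hat\nu)+R(\tilde p,\vec\nu),\qquad \|R(\tilde p,\vec\nu)\|=O(1),$$
where $J$ is $\pi/2$-rotation in $N_{\tilde p}P$ and $\hat\nu=\vec\nu/r$.

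Now the cutoff in $v_\epsilon$ is trivial when $r>\epsilon$: the set $\{p\in P:\|p-q\|\geq\epsilon\}$ coincides with $P$ near $\tilde p$, so $v_\epsilon(q)=v(q)$ there; when $r\leq\epsilon$, the pointwise bound $|\vec K(q,p)|\lesssim\|q-p\|^{1-n}$ gives $|v_\epsilon(q)|\lesssim 1/\epsilon$. Splitting the integral accordingly, the slab $r\leq\epsilon$ contributes only $O(1)$, and the main part is
$$\frac12\int_P\!\int_{\epsilon\leq r\leq\delta}\!\frac{(C_n')^2}{r^2}\,r\,dr\,d\theta\,\mu_P(\tilde p)=\pi(C_n')^2\ln(\delta/\epsilon)\cdot\mathrm{volume}(P),$$
yielding $\lim_{\epsilon\to 0}E_\epsilon(v)/\ln\epsilon=-\pi(C_n')^2\cdot\mathrm{volume}(P)$, with the dimensional constant absorbed into the claimed $C_n$.

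The main obstacle is showing that the remainder $R$ contributes only $O(1)$ uniformly in $(\tilde p,\vec\nu)$. A priori the cross-term $(C_n' r^{-1}J(\hat\nu),R(\tilde p,\vec\nu))$ could contribute at order $r^{-1}$ and alter the coefficient; the key point is that the angular average of the would-be offender over $\hat\nu\in S^1\subset N_{\tilde p}P$ is purely tangential to $P$ (reflecting the mean curvature, as isolated in Theorem~\ref{prop:LIA}) and so vanishes upon the normal-plane integration. This angular-cancellation mechanism is the technical core of the argument, carried out by Shashikanth for $n=4$ in~\cite{Shash}; it extends to general $n\geq 3$ via Definition~\ref{defMC}(b), which reduces the relevant averages to spherical integrals on $S^{n-3}$.
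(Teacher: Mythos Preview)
Your approach is sound in outline but takes a genuinely different route from the paper. The paper never analyzes the near-field profile of $v$ in the ambient space; instead it integrates by parts to rewrite the energy as a double integral of the Green function over $P\times P$. Introducing the form-potential $\beta$ with $\Delta\beta=\xi_P$ and $v^\flat=d^*\beta$, one has $E(v)=\tfrac12\int_M\xi\wedge *\beta$, which for $\xi_P=C\,\delta_P$ collapses to $\tfrac{C^2}{2}\int_{q\in P}\int_{p\in P}G(q,p)\,\mu_P(p)\,\mu_P(q)$. The regularization then truncates the inner integral to $P_\epsilon=\{p:\|p-q\|\ge\epsilon\}$, and the logarithm appears from $\int_\epsilon^a\rho^{2-n}\cdot\rho^{n-3}\,d\rho=\int_\epsilon^a\rho^{-1}\,d\rho$: the exponents cancel precisely because $\dim P=n-2$ matches the order of the Green singularity. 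No tubular neighborhood, no asymptotic expansion of $v$, and no angular argument is needed.

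Your direct tube computation also works, but your diagnosis of the ``main obstacle'' is off. If $R=O(1)$ as you assert, the cross-term $r^{-1}(J(\hat\nu),R)$ is $O(r^{-1})$ pointwise and integrates against $r\,dr\,d\theta$ to $O(1)$ with or without angular cancellation---it cannot touch the $\ln\epsilon$ coefficient. The actual subtlety is that $R$ is not $O(1)$: the curvature of $P$ contributes a term of order $\ln r$ in the direction $J({\bf MC})$ (this is precisely Theorem~\ref{prop:LIA} read off the membrane), so the honest expansion is $v(q)=C_n' r^{-1}J(\hat\nu)+O(\ln r)$. Even so, $\int_\epsilon^\delta(\ln r)\,dr=O(1)$ and $\int_\epsilon^\delta(\ln r)^2\,r\,dr=O(1)$, and the conclusion goes through without any cancellation mechanism. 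The paper's route sidesteps all of this by never leaving $P$; what your approach buys is a more concrete physical picture (the $1/r$ vortex profile in each normal plane) at the cost of a longer remainder analysis.
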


\begin{proof}
First for any vector field $v$ we rewrite the energy $E(v)$ via vorticity by introducing the form-potential:
$$
E(v)=\frac 12 \int_M v^\flat\wedge *d^* \beta =\frac 12 \int_M \xi\wedge *  \beta 
$$
for the closed 2-form $\beta$ satisfying $d^* \beta=v^\flat$ or, equivalently, 
$\Delta \beta=dd^* \beta= dv^\flat=\xi$. Given the vorticity 2-form $\xi$, the Poisson equation 
$\Delta \beta=\xi$ on the 2-forms is equivalent to  Poisson equations for their respective components:
$\Delta \beta_{ij}=\xi_{ij}$. 
Then $\beta$ can be reconstructed component-wise by using the Green function:
$\beta_{ij}(q)=\int_{\R^n} G(q,p)\,\xi_{ij}(p)\,\mu(p)$.

\medskip

For $\xi=\xi_P$ and  $\Delta \beta=\xi_P=C\cdot \delta_P$, one has 
$\beta_{\nu_1 \nu_2}(q)=C\int_P G(q,p)\mu_P(p)$ for the normal to $P$ 
component of the potential $\beta$, while other components are zero. 
Here $\mu_P$ is the volume form induced from $\R^n$ to $P$.

By plugging this to the formula 
$E(v) =\frac 12 \int_M \xi\wedge *  \beta $ and using $*\xi_P=C\cdot \mu_P$ we obtain 
$$
E(v)=\frac C2 \int_P  \beta_{\nu_1 \nu_2}\, \mu_P 
=\frac{C^2}{2} \int_{q\in P}  \int_{p\in P}  G(q,p)\, \mu_P(p)\, \mu_P(q)\,.
$$ 
The latter is a divergent integral, which can be regularized by considering \break
$E_\epsilon(v)=\frac 12 \int_{\R^n} (v ,v_\epsilon)\,\mu$.
Namely, given a point $q\in P$ replace the inner integral over $P$ 
by the one over $P_\epsilon=\{p\in P~|~\|q-p\|\ge\epsilon\}$
by  removing from $P$  the $\epsilon$-neighborhood of $q$.
Then one has 
$$
E_\epsilon(v)=\frac{C^2}{2} \int_{q\in P}  \int_{p\in P_\epsilon}  G(q,p)\, \mu_P(p)\, \mu_P(q)\,.
$$
As $\epsilon\to 0$ the inner integral $\int_{p\in P_\epsilon}  G(q,p)\, \mu_P(p)$ increases as
$C_n\cdot \ln \epsilon$, where the constant $C_n$ depends on dimension $n$ only. Indeed, 
as $p\to q$ one has $G(q,p)= C_n\,\|q-p\|^{2-n}\sim C_n\, \rho^{2-n}$, where $\rho$ is the geodesic distance from $p$ to $q$ in the membrane $P$.
Then the  integration of $G(q,p)$ in the spherical coordinates  over a small  $(n-2)$-dimensional punctured neighborhood $U_{\epsilon, a}$ of radius $a$  
around the point $q\in P$ in the membrane $P$ gives the integral 
$$
\int_{p\in U_{\epsilon, a}}  G(q,p)\, \mu_P(p) \sim
C_n\int_\epsilon^a \rho^{2-n} \rho^{n-3} \,d\rho= C_n\int_\epsilon^a \rho^{-1} \,d\rho= -C_n \ln\epsilon +\mathcal O(1)
$$ 
as $\epsilon\to 0$.
Then after the second integration over $q\in P$ the regularized energy $E_\epsilon(v)$
has the following asymptotics: 
$$
E_\epsilon(v)=C_n\cdot \int_{q\in P} (\ln \epsilon) \, \mu_P(q)+\mathcal O(1)=C_n\cdot \ln \epsilon\cdot{\rm volume}(P) +\mathcal O(1)\qquad{\rm as}\quad \epsilon\to 0\,,
$$
which completes the proof.
\end{proof}

\begin{remark}
{\rm
One can see from the proof 
that the logarithmic singularity of the energy $E_\epsilon(v)$ comes from close points in $P$.
To find the asymptotics one specifies a small parameter $a$ giving the ``range of interaction"
and send $\epsilon\to 0$, while other pairs of points do not contribute to the leading term in the expansion of  $E_\epsilon(v)$. This explains the term  ``localized induction approximation" (LIA).

Renormalize time and regard $H(P):={\rm volume}(P)$ as the new energy associated with fluid motions whose vorticity is supported on the membrane $P$. As we discussed above, this leads to the Hamiltonian dynamics of the membrane given by the skew-mean-curvature flow in any dimension.

Note that in the regularization above one essentially uses only the symmetry and the order of  
singularity of the Green function $G(q,p)$ as $\|q-p\|\to 0$. The same asymptotics of the Green function holds for an arbitrary manifold $M^n$ and so does the energy regularization which results in  $H(P)={\rm volume}(P)$.
}
\end{remark}

\bigskip


\begin{acknowledgements}
I am grateful to A.B.~Givental for a beautiful translation of the piece from ``Popov's Dream" by A.K.~Tolstoy. I thank G.~Misiolek and C.~Loeschcke for fruitful discussions and helpful remarks, the anonymous referee for various suggestions to improve the exposition, and  the MPIM in Bonn and
the IAS in Princeton for their kind hospitality. This work was partially supported by The Simonyi Fund and an NSERC research grant.
\end{acknowledgements}



\bigskip

\begin{thebibliography}{99}

\bibitem{Arn66}  
 Arnold,~V.I. (1966)
Sur la g\'eom\'etrie diff\'erentielle des groupes de Lie de
dimension infinie et ses applications \`a l'hydrodynamique des fluides
parfaits.
{\it Ann. Inst. Fourier}~{\bf 16}, 316--361.


\bibitem{ArnHamiltonian}  
 Arnold,~V.I. (1969)
The Hamiltonian nature of the Euler equation in the dynamics of rigid body and of an ideal fluid.
{\it Uspekhi Matem. Nauk}~{\bf 24:3}, 225--226.

\bibitem{arn}  
 Arnold,~V.I. (1989)
Mathematical methods of classical mechanics.
{\it Springer-Verlag, NY}, 508pp.


\bibitem{AK}
Arnold,~V.I. \& Khesin,~B.A. (1998)
Topological Methods in Hydrodynamics.
{\it Applied Mathematical Sciences}, vol.~125,  Springer-Verlag, 
New York, 374pp. 

\bibitem{Calini}
Calini,~A. (2004)
Integrable dynamics of knotted vortex filaments.
{\it in} Geometry, integrability and quantization; 
I.M. Mladenov and G.L. Naber, Editors, Softex, Sofia, 11--50.

\bibitem{EM}
Ebin,~D. and Marsden,~J.  (1970) Groups of diffeomorphisms and the motion of an incompressible
fluid. {\it Ann. Math.}~{\bf  92}, 102--163.

\bibitem{EEL}
Eells,~J. and Lemaire,~L.	(1983) Selected Topics in Harmonic Maps, {\it CBMS}, vol.50, AMS, Providence, RI.

\bibitem{Federer}
Federer,~H. (1978) Colloquium lectures on geometric measure theory, {\it  Bull. Amer. Math. Soc.,}~{\bf   84} (3), 291--338.

\bibitem{Vizman}
Haller,~S. and Vizman,~C. (2003)
Nonlinear Grassmannians as coadjoint orbits. {\it Preprint  arXiv:math.DG/0305089},  13pp, which is a longer version of {\it Math. Ann.} {\bf 329:4}, 771--785.


\bibitem{Khesin}
Khesin,~B. (2011)
 Dynamics of symplectic fluids and point vortices. 
{\it to appear in} Geom. and Funct. Anal., 13pp; (arXiv:1106.1609)

\bibitem{Kimura}
Kimura,~Y.  (1999)
Vortex motion on surfaces with constant curvature.  
{\it R. Soc. Lond. Proc. Ser. A}, {\bf 455:1981}, 245--259.

\bibitem{instability}
Krasny,~R.   (1986)
Desingularization of periodic vortex sheet roll-up. {\it J. Comput. Phys.}, {\bf 65(2)}, 292--313; (1991) Computing vortex sheet motion. 
{\it Proc. Int. Congr. Math.}, Kyoto, Japan 1990, Vol. II, 1573--1583.


\bibitem{Loeschcke}
Loeschcke,~C. (2011)
Instability of the vortex sheet configuration. {\it in preparation}


\bibitem{MW}
Marsden,~J. and Weinstein,~A. (1983)
Coadjoint orbits, vortices, and Clebsch variables for incompressible fluids. 
{\it Physica D}, {\bf 7:1-3}, 305--323.

\bibitem{MP}
Marchioro,~C. and Pulvirenti,~M. (1994)
Mathematical Theory of Incompressible Nonviscous Fluids.
{\it  Springer-Verlag}, vol.~96, Berlin, 283pp.


\bibitem{Shash}
Shashikanth,~B.N. (2012)
Vortex dynamics in $\R^4$. 
{\it J. Math. Phys.}, {\bf 53:1}, 013103 (22pp);
(arXiv:1110.2717)

\bigskip




\end{thebibliography}
\end{document}